\def\todaysdate{27\textsuperscript{th} August 2024}
\documentclass[reqno,a4paper]{article}
\usepackage{etex}
\usepackage[T1]{fontenc}
\usepackage[utf8]{inputenc}
\usepackage{lmodern}

\usepackage[style=alphabetic,giveninits=true,backref,backend=biber,isbn=false,url=false,maxbibnames=99,maxalphanames=4]{biblatex}

\renewbibmacro{in:}{}
\ExecuteBibliographyOptions{doi=false}
\newbibmacro{string+doi}[1]{\iffieldundef{doi}{\iffieldundef{url}{#1}{\href{\thefield{url}}{#1}}}{\href{http://dx.doi.org/\thefield{doi}}{#1}}}
\DeclareFieldFormat*{title}{\usebibmacro{string+doi}{\emph{#1}}}
\DefineBibliographyStrings{english}{%
  backrefpage = {$\uparrow$},
  backrefpages = {$\uparrow$},
}

\addbibresource{biblio.bib}

\usepackage{amssymb,amsmath,amsthm}
\usepackage{thmtools,xcolor}
\usepackage{units}
\usepackage{graphicx}
\usepackage[all]{xy}
\usepackage{tikz}
\usetikzlibrary{arrows,calc,positioning,decorations.pathreplacing}
\usepackage{tikz-cd}
\usepackage{relsize}
\usepackage{mhsetup}
\usepackage{mathtools}
\usepackage{stmaryrd}
\usepackage{tocloft}
\usepackage{paralist} 
\usepackage[left=3cm,right=3cm,top=3cm,bottom=3cm]{geometry} 
\usepackage[bottom]{footmisc}
\usepackage[colorlinks,citecolor=blue,linkcolor=blue,urlcolor=blue,filecolor=blue,breaklinks]{hyperref}
\usepackage{footnotebackref}
\usepackage[format=plain,indention=1em,labelsep=quad,labelfont={up},textfont={footnotesize},margin=2em]{caption}
\usepackage[mathscr]{euscript}
\usepackage{accents}
\usepackage{xparse}

\setcounter{tocdepth}{2}

\definecolor{lightblue}{rgb}{0.8,0.8,1}

\setlength{\cftbeforesecskip}{0ex}

\setlength{\parindent}{0pt}
\setlength{\parskip}{1ex}

\numberwithin{equation}{section}
\numberwithin{figure}{section}

\definecolor{vdarkred}{rgb}{0.7,0,0}
\declaretheoremstyle[
  spaceabove=\topsep,
  spacebelow=\topsep,
  headpunct=,
  numbered=no,
  postheadspace=1ex,
  headfont=\color{vdarkred}\normalfont\bfseries,
  bodyfont=\normalfont\itshape,
]{colored}
\declaretheoremstyle[
  spaceabove=\topsep,
  spacebelow=\topsep,
  headpunct=,
  numbered=no,
  postheadspace=1ex,
  headfont=\normalfont\bfseries,
  bodyfont=\normalfont\itshape,
]{italic}
\declaretheoremstyle[
  spaceabove=\topsep,
  spacebelow=\topsep,
  headpunct=,
  numbered=no,
  postheadspace=1ex,
  headfont=\normalfont\bfseries,
  bodyfont=\normalfont\upshape,
]{upright}

\declaretheorem[style=italic,name=Theorem,numbered=yes,numberwithin=section]{thm}
\declaretheorem[style=italic,name=Lemma,numbered=yes,numberlike=thm]{lem}
\declaretheorem[style=italic,name=Proposition,numbered=yes,numberlike=thm]{prop}
\declaretheorem[style=italic,name=Corollary,numbered=yes,numberlike=thm]{coro}

\declaretheorem[style=italic,name=Theorem,numbered=yes,numberwithin=section]{athm}

\declaretheorem[style=upright,name=Definition,numbered=yes,numberlike=thm]{defn}
\declaretheorem[style=upright,name=Remark,numbered=yes,numberlike=thm]{rmk}
\declaretheorem[style=upright,name=Example,numbered=yes,numberlike=thm]{eg}

\declaretheorem[style=upright,name=Notation,numbered=yes,numberlike=thm]{notation}
\declaretheorem[style=upright,name=Convention,numbered=yes,numberlike=thm]{convention}

\setcounter{secnumdepth}{4}
\makeatletter
\renewcommand*{\@seccntformat}[1]{\upshape\csname the#1\endcsname.\hspace{1ex}}
\renewcommand*{\section}{\@startsection{section}{1}{\z@}%
	{2.5ex \@plus 1ex \@minus 0.2ex}%
	{1.5ex \@plus 0.2ex}%
	{\normalfont\Large\bfseries}}
\renewcommand*{\subsection}{\@startsection{subsection}{2}{\z@}%
	{2.5ex \@plus 1ex \@minus 0.2ex}%
	{1.5ex \@plus 0.2ex}%
	{\normalfont\large\bfseries}}
\renewcommand*{\subsubsection}{\@startsection{subsubsection}{3}{\z@}%
	{2.5ex \@plus 1ex \@minus 0.2ex}%
	{1.5ex \@plus 0.2ex}%
	{\normalfont\normalsize\bfseries}}
\makeatother

\newenvironment{itemizeb}%
{\begin{compactitem}

}%
{\end{compactitem}}
{\begin{compactitem}[#1]

}%
{\end{compactitem}}
{\begin{compactdesc}

}%
{\end{compactdesc}}

\newcommand{\cE}{\mathcal{E}}

\newcommand{\cM}{\mathcal{M}}

\newcommand{\cU}{\mathcal{U}}

\newcommand{\bD}{\mathbb{D}}

\newcommand{\bF}{\mathbb{F}}

\newcommand{\bK}{\mathbb{K}}

\newcommand{\bN}{\mathbb{N}}

\newcommand{\bQ}{\mathbb{Q}}
\newcommand{\bR}{\mathbb{R}}
\newcommand{\bS}{\mathbb{S}}

\newcommand{\bZ}{\mathbb{Z}}

\newcommand{\opp}{\mathrm{op}}
\newcommand{\too}{\ensuremath{\longrightarrow}}
\newcommand{\rot}{\mathrm{rot}}
\newcommand{\Sym}{\mathrm{Sym}}

\newcommand{\Ab}{\mathbf{Ab}}
\newcommand{\Mod}{\textrm{-}\mathrm{Mod}}
\newcommand{\Tor}{\mathrm{Tor}}

\newcommand{\pa}{\partial}
\newcommand{\aug}{\mathrm{aug}}
\newcommand{\Cok}{\mathrm{Coker}}
\newcommand{\Ker}{\mathrm{Ker}}
\newcommand{\Image}{\mathrm{Im}}

\newcommand{\odd}{\mathrm{odd}}
\newcommand{\even}{\mathrm{even}}
\newcommand{\st}{\mathrm{st}}
\newcommand{\Hom}{\mathrm{Hom}}
\newcommand{\Ext}{\mathrm{Ext}}
\newcommand{\id}{\mathrm{id}}


\begin{document}
\title{\LARGE\bfseries Stable twisted cohomology of the mapping class groups in the unit tangent bundle homology}
\author{\normalsize Nariya Kawazumi and Arthur Souli{\'e}}
\date{\normalsize\todaysdate}
\maketitle
{\makeatletter
\renewcommand*{\BHFN@OldMakefntext}{}
\makeatother
\footnotetext{2020 \textit{Mathematics Subject Classification}: Primary: 20J06, 55R20, 55S05; Secondary: 13C12, 13D07, 16W50, 18A25, 55U20, 57R20.}
\footnotetext{\textit{Key words and phrases}: stable (co)homology with twisted coefficients, mapping class groups,  unit tangent bundle (co)homology representations.}}

\begin{abstract}
We compute the stable cohomology groups of the mapping class groups of compact orientable surfaces with one boundary, with twisted coefficients given by the rational homology of the unit tangent bundles of the surfaces. These coefficients define a \emph{covariant} functor over the classical category associated to mapping class groups, rather than a \emph{contravariant} one, and are thus out of the scope of the traditional framework  to study twisted \emph{cohomological} stability. A remarkable property is that the computed stable twisted cohomology is not free as a module over the stable cohomology algebra with constant rational coefficients. For comparison, we also compute the stable cohomology group with coefficients in the first rational cohomology of the unit tangent bundle of the surface, which fits into the traditional framework.
\end{abstract}

\section*{Introduction}
We consider a smooth compact connected orientable surface of genus $g\geq0$ and with one boundary component $\Sigma_{g,1}$. We denote by $\Gamma_{g,1}$ its mapping class group, that is the isotopy classes of its diffeomorphisms restricting to the identity on the boundary.
The (co)homology of the groups $\{\Gamma_{g,1},g\in\bN\}$ has been extensively studied over the past decades. In particular, the full computations of their cohomology groups with twisted coefficients remain an active research topic; see \cite[\S5.5]{GalatiusKupersRW} and \cite[Part 4]{Hain}.
In this paper, we compute the stable cohomology groups of the mapping class groups with twisted coefficients defined from the first rational (co)homology group of the unit tangent bundles of the considered surfaces; see Theorems~\ref{thm:main_thm_0} and \ref{thm:main_thm_1}. To the best of our knowledge, this is the first computed example of a sequence of finite-dimensional, rational representations of the mapping class groups $\Gamma_{g,1}$ such that their twisted cohomology exhibits (abstract) stability as a module over the stable cohomology with constant rational coefficients $H^{*}(\Gamma_{\infty,1};\bQ)$, while the associated stable twisted cohomology is not free as a module over $H^{*}(\Gamma_{\infty,1};\bQ)$.

\paragraph*{Background on (co)homological stability.}
A key step towards the computations of the (co)homology of the mapping class groups is their homological stability properties.
Namely, for each $g\geq 0$, we consider the canonical injection $\mathfrak{i}_{g} \colon\Gamma_{g,1}\hookrightarrow \Gamma_{g+1,1}$ induced by viewing $\Sigma_{g,1}$ as a subsurface of $\Sigma_{g+1,1}\simeq \Sigma_{1,1} \natural  \Sigma_{g,1}$, and by extending the diffeomorphisms of $\Sigma_{g,1}$ by the identity on the complement $\Sigma_{1,1}$. We also consider a family of $\Gamma_{g,1}$-representations $F=\{F(g),g\in\bN\}$ with $\Gamma_{g,1}$-equivariant morphisms $F(g)\to F(g+1)$.
These data define a morphism between the homology groups $\Phi_{i,g}(F)\colon H_{i}(\Gamma_{g,1};F(g))\to H_{i}(\Gamma_{g+1};F(g+1))$,
for each $i\geq0$.
If, for any $i$, there exists some $N(i,F)\in\bN$ depending on $i$ and $F$ such that this canonical morphism is an isomorphism when $N(i,F)\leq g$, then the mapping class groups are said to satisfy \emph{homological stability} with (twisted) coefficients in $F$.
The homological stability property with constant coefficients (i.e.~for $F(g)=\bZ$ for all $g\geq0$) is due to Harer \cite{Harer}. 
The range $N(i,\bZ)$ is improved by Boldsen \cite{Boldsen} and Randal-Williams \cite{Randal-WilliamsJEMS}.
Moreover, Ivanov \cite{Ivanov} proves a homological stability property with twisted coefficients given by $\Gamma_{g,1}$-representations forming a functor $F:\cU\cM_{2}\to\Ab$ (where $\cU\cM_{2}$ is defined in \S\ref{sec:the_category_UM} and $\Ab$ is the category of abelian groups) satisfying some polynomiality conditions; we refer to \S\ref{s:stab_hom_framework} for further details.
Finally, Randal-Williams and Wahl \cite{WahlRandal-Williams} and Galatius, Kupers and Randal-Williams \cite[\S5.5]{GalatiusKupersRW} extract the general framework for proving homological stability properties (both for constant and twisted coefficients) for general families of groups, subsuming these previous studies and recovering their results.

We may turn these results into twisted \emph{cohomological} stability as follows. We denote by $\cU\cM_{2}^{\opp}$ the opposite category of $\cU\cM_{2}$. We consider a functor $F:\cU\cM_{2}\to\Ab$ to which we apply the duality functor $-^{\vee}$, thus defining a functor $F^{\vee}\colon \cU\cM_{2}^{\opp}\to\Ab$ (i.e.~a \emph{contravariant} functor $\cU\cM_{2}\to\Ab$). Using the canonical injection $\mathfrak{i}_{g}$, we define stabilisation maps in cohomology $\Phi'_{i,g}(F)\colon H^{i}(\Gamma_{g+1,1}; F^{\vee}(g+1)) \to H^{i}(\Gamma_{g,1}; F^{\vee}(g))$ for all $i,g\geq0$.
If there is \emph{homological} stability with respect to the morphisms $\{\Phi_{i,g}(F)\}_{g\in\bN}$, then the morphism $\Phi'_{i,g}(F)$ is an isomorphism when $N(i,F)\leq g$ for $N(i,F)$ the above homological stability bound, and we say that there is \emph{cohomological stability} with (twisted) coefficient in $F^{\vee}$; see Proposition~\ref{prop:hom_stab_MCG}.
For a fixed $i\geq0$, the inverse limit $\underleftarrow{\lim}_{g\geq0}(H^{i}(\Gamma_{g,1};F^{\vee}(g)))$ induced by the stabilisation maps $\{\Phi'_{i,g}(F)\}_{g\in\bN}$ is denoted by $H^{i}(\Gamma_{\infty,1};F^{\vee})$; see Definition~\ref{def:homological_stability_covariant}.
We stress that this conversion turns the \emph{variance} of the functor $F$ encoding the twisted coefficients into its opposite, because the considered representations are changed into their duals.

In contrast, defining a notion of stability for the twisted cohomology groups $H^{i}(\Gamma_{g,1}; F(g))$ is not clear in general. However, we can handle this type of coefficients with an exotic approach detailed in \S\ref{s:exotic}. Namely, for each $g\geq0$, we require each $\Gamma_{g,1}$-equivariant morphism $F(g)\to F(g+1)$ to have a canonical $\Gamma_{g,1}$-equivariant splitting. Then the injection $\mathfrak{i}_{g}$ along with that splitting induce a map in cohomology $\Psi_{i,g}(F)\colon H^{i}(\Gamma_{g+1,1}; F(g+1)) \to H^{i}(\Gamma_{g,1}; F(g))$ for each $i\geq0$. In this context, the notion of cohomological stability and $H^{i}(\Gamma_{\infty,1}; F):=\underleftarrow{\lim}_{g\geq0}H^{i}(\Gamma_{g,1}; F(g))$ are defined with respect to these stabilisations maps; see Definition~\ref{def:stable_cohomology_covariant}.

In any case, for $M=F$ or $F^{\vee}$, when the twisted cohomological stability property is satisfied, the value of the twisted cohomology group $H^{i}(\Gamma_{g,1},M(g))$ in the stable range is isomorphic to $H^{i}(\Gamma_{\infty,1};M)$, and is thus called the \emph{stable cohomology} of the mapping class groups with respect to $M$.
In particular, the main reason for considering cohomology rather than homology is the existence of the cup product structure, which is of key use to study the stable cohomology groups. For instance, the groups $H^{*}(\Gamma_{\infty,1};M):=\bigoplus_{i\geq0}H^{i}(\Gamma_{\infty,1};M)$ naturally form a $H^{*}(\Gamma_{\infty,1};\bZ)$-module thanks to the cup product; see Definitions~\ref{def:homological_stability_covariant} and \ref{def:stable_cohomology_covariant}.

We will use this last property for our reasonings. In particular, we will need at some point the full computation of the ring of the stable cohomology of the mapping class groups with constant coefficients, that is, $H^{*}(\Gamma_{\infty,1};R)$ over a ground ring $R$; see Theorem~\ref{thm:main_thm_1} for instance. As far as we know, the only case for which the stable constant cohomology ring is convenient to handle is when $R:=\bQ$; see \eqref{eq:Madsen_weiss_thm} and the ``Perspectives'' paragraph below.
Indeed, by proving Mumford's conjecture \cite{Mumford}, Madsen and Weiss \cite{MadsenWeiss} compute the rational stable homology of the mapping class groups. Namely, they use the standard cohomology classes defined by  Mumford \cite{Mumford}, Morita \cite{Morita_char_class_bull,Morita_char_class} and Miller \cite{Miller} $\{e_{i}\in H^{2i}(\Gamma_{\infty,1};\bQ);i\geq1\}$, called the \emph{classical Mumford-Morita-Miller classes}, to describe the algebra $H^{*}(\Gamma_{\infty,1};\bQ)$ as follows:
\begin{equation}\label{eq:Madsen_weiss_thm}
    H^{*}(\Gamma_{\infty,1};\bQ)\cong\bQ[\{e_{i},i\geq1\}].
\end{equation}
Denoting the $\bQ$-vector space $\bigoplus_{i=1}^{\infty}\bQ e_{i}$ by $\cE$ and by $\Sym_{\bQ}(\cE)$ its symmetric algebra, Madsen-Weiss theorem \eqref{eq:Madsen_weiss_thm} may be reframed as an isomorphism $H^{*}(\Gamma_{\infty,1};\bQ)\cong\Sym_{\bQ}(\cE)$.

Furthermore, let us consider the twisted coefficients given by the first homology of the surface $\Sigma_{g,1}$ denoted by $H(g)$ (and by $H_{\bQ}(g)$ its rational version); we also use the notation $H$ for the family $\{H(g), g\in \bN\}$ of these coefficients. The stable twisted cohomology $H^{*}(\Gamma_{\infty,1}; H_{\bQ})$ was first computed by Harer \cite[\S7]{HarerH_3} in terms of $H^{*}(\Gamma_{\infty,1}; \bQ)$. Similarly, considering the closed oriented surfaces $\Sigma_{g}$ analogues (obtained from $\Sigma_{g,1}$ by capping the boundary component with a disc), Looijenga \cite{Looijenga} computed the stable cohomology of the associated mapping class groups $\{\Gamma_{g},g\in\bN\}$ with coefficients in any irreducible representation of the rational symplectic group in terms of $H^{*}(\Gamma_{\infty,1}; \bQ)$. Looijenga did not use the stability result of \cite{Ivanov} but only that of \cite{Harer}. Independently from these previous works, the first author \cite{KawazumitwistedMMM} introduced a series of twisted cohomology classes on the mapping class group $\Gamma_{g,1}$, called the {\it twisted Mumford-Morita-Miller classes} $\{m_{i,j};i\geq0,j\geq1\}$. Based on Looijenga's idea \cite{Looijenga}, one can prove that some algebraic combinations of the twisted Mumford-Morita-Miller classes define a free generating set for $H^{*}(\Gamma_{\infty,1}; H^{\otimes n})$ with $n \geq 1$ as a $H^{*}(\Gamma_{\infty,1};\bZ)$-module; see \cite{stablecohomologyKawazumi}. These computations may also be done via other methods; see Randal-Williams \cite[Appendix~B]{RWautfreegroups}.

\paragraph*{The unit tangent bundle homology representations.}
The representation theory of the mapping class groups of surfaces is wild and remains an active research topic; see for instance Margalit’s expository paper \cite{Margalit}. In particular, there are few known finite-dimensional representations of the mapping class group $\Gamma_{g,1}$ apart from the first homology of the surface $H(g)$ and functors thereof. However, some other mapping class groups representations which are naturally defined are those given by homology and cohomology of the unit tangent bundle $UT\Sigma_{g,1}$ of the surface $\Sigma_{g,1}$. We note that the homology groups of $UT\Sigma_{g,1}$ are straightforwardly computed from those of the surface $\Sigma_{g,1}$ and of the circle $\bS^{1}$, by using the Serre spectral sequence of the principal $\bS^{1}$-bundle $\bS^{1}\hookrightarrow UT\Sigma_{g,1}\to\Sigma_{g,1}$ defined by the canonical projection. In particular, the homology $H_{*}(UT\Sigma_{g,1}; \bZ)$ is concentrated in degrees $0\leq * \leq 2$, it is finitely generated and torsion-free; see \cite[\S 1]{Trapp} for instance. We focus on the first integral homology group $H_{1}(UT\Sigma_{g,1}; \bZ)$, that we denote by $\tilde{H}(g)$. Its dual representation is denoted by $\tilde{H}^{\vee}(g)$, while we denote the corresponding rational versions by $\tilde{H}_{\bQ}(g)$ and $\tilde{H}^{\vee}_{\bQ}(g)$ respectively. These representations have been first studied by Trapp \cite[Th.~2.2]{Trapp} and we refer to \S\ref{sec:unit_tangent_bundle_representations} for further details.

In particular, the representation $\tilde{H}^{\vee}(g)$ is a non-trivial extension of $\bZ$ by $H^{\vee}(g)$; see \eqref{SESchutb}. This extension corresponds to the twisted Mumford-Morita-Miller class $m_{1,1}$ in the stable cohomology module $H^{1}(\Gamma_{\infty,1}; H)$; see \S\ref{sec:unit_tangent_bundle_representations}. It is the preimage of the cohomology class introduced by Earle \cite{Earle} which generates $H^{1}(\Gamma_{g,1}; H(g))$ for $g\geq2$. We refer to \S\ref{sec:unit_tangent_bundle_representations} and \S\ref{sec:homological_stability} for further details.

\paragraph*{Results.}
In the present paper, we consider the cohomology of the mapping class groups with twisted coefficients given by $\tilde{H}(g)$ and $\tilde{H}^{\vee}(g)$.
The pathway to make these computations is based on the short exact sequences \eqref{SEShutb} and \eqref{SESchutb} featuring these modules, on the determination of the cohomology long exact sequence connecting homomorphisms (see \S\ref{subsec:first_cohomology_group_system} and \S\ref{sss:determination_connecting_hom}) and the Contraction formula \eqref{eq:contractionformula} between stable twisted cohomology classes.

Firstly, the computation for the dual representations $\tilde{H}^{\vee}(g)$ is the least difficult. These define a functor $\tilde{H}^{\vee}:\cU\cM_{2}^{\opp}\to\Ab$, corresponding to the classical framework for cohomological stability. We prove the following.
\begin{athm}[{Theorem~\ref{thm:stable_cohomology_contravariant_tildeH}, Corollary~\ref{coro:stable_cohomology_contravariant_tildeH}}]\label{thm:main_thm_0}
The stable cohomology module $H^{*}(\Gamma_{\infty,1};\tilde{H}^{\vee})$ is isomorphic to the free $H^{*}(\Gamma_{\infty,1};\bZ)$-module with basis $\{\tilde{m}_{i,1},i\geq2\}$, where $\tilde{m}_{i,1}$ denotes the image of the twisted Mumford-Morita-Miller class $m_{i,1}$ along the natural map $H^{2i-1}(\Gamma_{\infty,1};H^{\vee})\to H^{2i-1}(\Gamma_{\infty,1};\tilde{H}^{\vee})$. Over the rationals, the stable cohomology module $H^{*}(\Gamma_{\infty,1};\tilde{H}^{\vee}_{\bQ})$ is isomorphic to $\bigoplus_{i\geq2} \Sym_{\bQ}(\cE)\tilde{m}_{i,1}$. In particular, it is concentrated in odd degrees greater or equal to $3$.
\end{athm}
On the contrary, the representations $\tilde{H}(g)$ induce a functor $\tilde{H}:\cU\cM_{2}\to\Ab$. Also, each map $\tilde{H}(g)\to \tilde{H}(g+1)$ admits a $\Gamma_{g,1}$-equivariant splitting, and so the functor $\tilde{H}$ satisfies cohomological stability in the sense of \S\ref{s:exotic}; see Proposition~\ref{prop:homological_stability_covariant}. As far as the authors know, any qualitative general result or computations for such twisted coefficients have not been realised yet. However, the method we set to compute the stable twisted cohomology groups in this case compel to use the field $\bQ$ as ground ring, in order to work over a ring such that the stable cohomology with constant coefficients is fully computed; see \eqref{eq:Madsen_weiss_thm}.
\begin{athm}[{Corollary~\ref{cor:stable_cohomology_first_homology_group_result}, Theorem~\ref{thm:explicit-generators-stable-cohomology-tildeH}}]\label{thm:main_thm_1}
The stable cohomology module $H^{*}(\Gamma_{\infty,1};\tilde{H}_{\bQ})$ is isomorphic to the direct sum $\bQ\theta\bigoplus \mathfrak{M}$ defined as follows.
\begin{itemizeb}
\item $\bQ\theta$ is the trivial $\Sym_{\bQ}(\cE)$-module (i.e.~each class $e_{i}$ acts as zero on $\bQ\theta$) defined by the stable $0$th-cohomology class $\theta$ generating $H^{0}(\Gamma_{\infty,1};\bQ)\cong \bQ$, and we have $H^{0}(\Gamma_{\infty,1};\tilde{H}_{\bQ})\cong \bQ\theta$.
\item For each pair $(i,j)$ of non-null natural numbers such that $j>i$, there is a unique class $M_{i,j}\in H^{2(i+j)-1}(\Gamma_{\infty,1};\tilde{H}^{\vee}_{\bQ})$ which is mapped to $e_im_{j,1}- e_jm_{i,1}\in H^{2(i+j)-1}(\Gamma_{\infty,1};H_{\bQ}^{\vee})$ along the natural map $H^{2(i+j)-1}(\Gamma_{\infty,1};\tilde{H}^{\vee}_{\bQ})\to H^{2(i+j)-1}(\Gamma_{\infty,1};H_{\bQ}^{\vee})$. We denote by $\mathfrak{M}$ the $\Sym_{\bQ}(\cE)$-submodule of $H^{*}(\Gamma_{\infty,1};\tilde{H}_{\bQ})$ generated by the classes $M_{i,j}$ for all $j>i\geq 1$. A presentation of $\mathfrak{M}$ is given by the generators $\{M_{i,j};j>i\geq1\}$ along with the following relations
$$e_i M_{j,k} - e_j M_{i,k} + e_k M_{i,j} = 0$$
for all $k > j > i \geq 1$.
\end{itemizeb}
\end{athm}
In particular, the stable twisted cohomology $H^{*}(\Gamma_{\infty,1};\tilde{H}_{\bQ})$ is not free as $\Sym_{\bQ}(\cE)$-module; see also Theorem~\ref{thm:homology_algebra_first}.
Finally, using the stability bound of Proposition~\ref{prop:hom_stab_MCG}, we deduce from Theorems~\ref{thm:main_thm_0} and \ref{thm:main_thm_1} that $H^{0}(\Gamma_{g,1};\tilde{H}^{\vee}_{\bQ}(g))$ and $H^{0}(\Gamma_{g,1};\tilde{H}_{\bQ}(g))$ are not isomorphic for $g\geq 5$, and so:
\begin{athm}\label{thm:tilde_H_not_self_dual}
For $g\geq 5$, the $\Gamma_{g,1}$-representation $H_{1}(UT\Sigma_{g,1}; \bZ)$ is not isomorphic to its dual.
\end{athm}

\paragraph*{Perspectives.}
Most of the key steps to prove Theorem~\ref{thm:main_thm_1} are done with integral coefficients; see in particular Proposition~\ref{prop:stable_cohomology_first_homology_group_result_integral}. Therefore, we might in principle be able to do the computations with $\bZ$ as ground ring. However, although the stable twisted cohomology module $H^{*}(\Gamma_{\infty,1};H)$ is free over $H^{*}(\Gamma_{\infty,1};\bZ)$ (see \cite[Th.~1.B]{stablecohomologyKawazumi}), the stable cohomology $H^{*}(\Gamma_{\infty,1};\bZ)$ is still poorly known. In the same spirit, one could make stable twisted cohomology computations with the finite field $\bF_{p}$ as ground ring by using the computations of $H^{*}(\Gamma_{\infty,1};\bF_{p})$ by Galatius \cite{Galatius}.

On another note, a natural extension of the results of Theorems~\ref{thm:main_thm_0} and \ref{thm:main_thm_1} consists in considering the exterior powers of the representations  $\tilde{H}_{\bQ}(g)$ and $\tilde{H}^{\vee}_{\bQ}(g)$ respectively. This is the aim of the sequel work \cite{KawazumiSoulieII}. In particular, the stable twisted cohomology modules $H^{*}(\Gamma_{\infty,1};\Lambda^{d}\tilde{H}^{\vee}_{\bQ})$ (for all $d\geq2$) and $H^{*}(\Gamma_{\infty,1};\Lambda^{d}\tilde{H}_{\bQ})$ (for $2\leq d\leq 5$) are thoroughly studied.

\paragraph*{Outline.} The paper is organised as follows. In \S\ref{sec:general_recollections}, we make recollections on the representation theory of mapping class groups and on the classical and twisted Mumford-Morita-Miller cohomology classes. In \S\ref{s:stab_hom_framework}, we recall the framework and properties for twisted cohomological stability of the mapping class groups. In \S\ref{sec:first_homology_group_systems}, we make the full computations of the mapping class groups stable twisted cohomology with coefficients in the first rational homology and cohomology of the unit tangent bundle.

\paragraph*{Conventions and notations.}
For a ring $R$, we denote by $R\Mod$ (resp. $\mathrm{Mod}\textrm{-}R$) the category of left (resp. right) $R$-modules. Non-specified tensor products are taken over the clear ground ring.
For $R=\bZ$, the category of  $\bZ$-modules is also denoted by $\Ab$.
For a map $f$, when everything is clear from the context, we generically denote by $f_{*}$ the induced map in homology and by $f^{*}$ the induced map in cohomology. The duality functor, denoted by $-^{\vee} : R\Mod \to (\mathrm{Mod}\textrm{-}R)^{\opp}$, is defined by $\Hom_{R\Mod}(-,R)$. In particular, for $G$ a group and $V$ a $R[G]$-module, we denote by $V^{\vee}$ the dual $R[G]$-module $\Hom_{R}(V,R)$ (using the canonical isomorphism $R[G]\cong (R[G])^{\opp}$ which maps each $\varphi \in G$ to $\varphi^{-1}$).
Also, for a functor $F\colon \cU\cM_{2} \to \Ab$, the post-composition by the duality functor defines a functor $F^{\vee}$ that we view as $\cU\cM_{2} \to \Ab$ (rather than $\cU\cM_{2} \to \Ab^{\opp}$, these two conventions being equivalent).

Considering a functor $M\colon \cU\cM_{2} \to \Ab$, we generally denote the twisted cohomology groups $H^{*}(\Gamma_{\infty,1};M)$ and $H^{*}(\Gamma_{\infty,1};M^{\vee})$ by $H_{\st}^{*}(M)$ and $H_{\st}^{*}(M^{\vee})$.
The cup product is denoted by $\cup$, or by an empty space for the sake of simplicity.
The first integral homology group $H_{1}(\Sigma_{g,1}; \bZ)$ is generally denoted by $H(g)$ all along the paper, and we denote by $H_{\bQ}(g)$ the first rational homology group $H_{1}(\Sigma_{g,1},\bQ)$.

\paragraph*{Acknowledgements.}
The authors would like to thank Geoffrey Powell, Oscar Randal-Williams, Antoine Touz{\'e}, Christine Vespa and Shun Wakatsuki for illuminating discussions and questions.
They would also like to thank the anonymous referee for helpful comments and suggestions on earlier versions of this work.
The authors were supported by the PRC CNRS-JSPS French-Japanese Project ``Cohomological study of MCG and related topics''. The first author was supported in part by the grants
JSPS KAKENHI 15H03617, 18K03283, 18KK0071, 19H01784, 20H00115 and 22H01120.
The second author was supported by a Rankin-Sneddon Research Fellowship of the University of Glasgow, by the Institute for Basic Science IBS-R003-D1 and by the ANR Project AlMaRe ANR-19-CE40-0001-01.

\tableofcontents

\section{Representations and cohomological structures}\label{sec:general_recollections}

In this section, we review some representations of the mapping class groups, in particular the unit tangent bundle homology representations (see \S\ref{sec:unit_tangent_bundle_representations}) for which we make the connection with the Earle class (see \S\ref{s:Earle}).

Let us first consider the first integral homology group of the surface $\Sigma_{g,1}$, denoted by $H(g)$. We recall as a classical fact that the natural map $H^{1}(\Sigma_{g,1},\partial\Sigma_{g,1};\bZ)\to H^{1}(\Sigma_{g,1};\bZ)$ is an isomorphism. Hence the Poincar{\'e}-Lefschetz duality, which is given by the cap product with the relative fundamental class, provides a $\Gamma_{g,1}$-module isomorphism between $H^{1}(\Sigma_{g,1};\bZ)$ and $H(g)$.
Since $H(g)\cong \bZ^{2g}$ as an abelian group (and is thus torsion-free), we deduce from the Universal Coefficient Theorem for cohomology of spaces (see \cite[Ex.~3.6.7]{Weibel}) that $H^{1}(\Sigma_{g,1}; \bZ)$ is isomorphic to $H^{\vee}(g)$ as $\Gamma_{g,1}$-modules. Therefore, we have a $\Gamma_{g,1}$-module isomorphism
\begin{equation}\label{eq:dual_H}
H^{\vee}(g) \cong H(g).
\end{equation}
This isomorphism may alternatively be seen as a consequence of the fact that the action of $\Gamma_{g,1}$ on $H(g)$ factors through the symplectic group $\mathrm{Sp}_{2g}(\bZ)$.

Let us now move on to the unit tangent bundle homology representations. We first recall the notion of framings of the surface.
We denote by $T\Sigma_{g,1}$ the tangent bundle of the surface $\Sigma_{g,1}$. 
A \emph{framing} of $\Sigma_{g,1}$ is an orientation-preserving isomorphism of oriented vector bundles $T\Sigma_{g,1} \cong \Sigma_{g,1}\times \bR^2$. Since $\Sigma_{g,1}$ has non-empty boundary, there exist framings of $\Sigma_{g,1}$. We fix a Riemannian metric $\Vert \cdot \Vert$ on $T\Sigma_{g,1}$.
By definition, the unit tangent bundle $UT\Sigma_{g,1}$ is the set of 
elements of $T\Sigma_{g,1}$ whose length is $1$ with respect to $\Vert\cdot\Vert$. 
The canonical projection of the unit tangent bundle $UT\Sigma_{g,1}$ onto the surface defines the principal $\bS^{1}$-bundle $\bS^{1}\overset{\iota}{\hookrightarrow}UT\Sigma_{g,1}\overset{\varpi}{\to}\Sigma_{g,1}$. It may be regarded as 
the quotient of the complement of the zero section in $T\Sigma_{g,1}$ by the action of the positive real numbers $\bR_+$ by scalar multiplication. So, for any diffeomorphism $\phi$ of $\Sigma_{g,1}$, its differential $d\phi$ acts on the unit tangent bundle $UT\Sigma_{g,1}$.

Since a framing is a section of the oriented frame bundle of $T\Sigma_{g,1}$ and $\bS^{1} = \mathrm{SO}_{2}$ is a deformation retract of $\mathrm{GL}_{2}(\bR)^{+}$, the set $\mathscr{F}(\Sigma_{g,1})$ of homotopy classes (without fixing the boundary) of framings of $\Sigma_{g,1}$ identifies with the homotopy set of sections of $UT\Sigma_{g,1}$, and hence to that of $\Sigma_{g,1}$-equivariant maps $\xi: UT\Sigma_{g,1} \to \bS^{1}$. 
The latter set is an affine set modelled by the group $[\Sigma_{g,1}, \bS^{1}]\cong H^{1}(\Sigma_{g,1}; \bZ)$. Mapping each framing to the homotopy class of the corresponding $\xi: UT\Sigma_{g,1} \to \bS^{1}$, we obtain an $H^{1}(\Sigma_{g,1}; \bZ)$-equivariant map $\mathscr{F}(\Sigma_{g,1}) \to [UT\Sigma_{g,1}, \bS^{1}] \cong H^{1}(UT\Sigma_{g,1}; \bZ)$.
Therefore, the set $\mathscr{F}(\Sigma_{g,1})$ is isomorphic to that of the cohomology classes $u \in H^{1}(UT\Sigma_{g,1}; \bZ)$ whose restriction $\iota^{*}(u)$ is equal to the positive generator of $H^{1}(\bS^{1}; \bZ)$, or equivalently to the set of the homotopy classes of continuous maps $\xi: UT\Sigma_{g,1} \to \bS^{1}$ whose restriction to each fibre is homotopic to an orientation-preserving diffeomorphism.

For $\alpha: \bS^{1}\to \Sigma_{g,1}$ an immersed loop, its \emph{rotation number} $\rot_\xi(\alpha) \in \bZ$ with respect to the framing $\xi$ is the mapping degree of the composite of ${\overset\cdot\alpha}/{\Vert \overset\cdot\alpha\Vert}:  \bS^{1}\to UT\Sigma_{g,1}$ and $\xi: UT\Sigma_{g,1} \to \bS^{1}$.
The difference of two framings $\xi$ and $\xi'$ is given by a cohomology class $u \in H^{1}(\Sigma_{g,1}; \bZ)$ if and only if $\rot_{\xi'}(\alpha) - \rot_\xi(\alpha) = u([\alpha]) \in \bZ$, where $[\alpha] \in H_{1}(\Sigma_{g,1}; \bZ)$ is the homology class of the immersed loop $\alpha$. The mapping class group $\Gamma_{g,1}$ acts on the set $\mathscr{F}(\Sigma_{g,1})$ by
$$\phi\cdot \xi = \xi\circ d\phi^{-1}: UT\Sigma_{g,1} 
\overset{d\phi^{-1}}\too UT\Sigma_{g,1} \overset{\xi}\too \bS^{1}$$
for $\phi \in \Gamma_{g,1}$ and $\xi \in \mathscr{F}(\Sigma_{g,1})$. For an immersed loop $\alpha$ on $\Sigma_{g,1}$, we have $\rot_{\phi\cdot\xi}(\alpha) = \rot_\xi(\phi^{-1}\circ\alpha)$.

\subsection{Earle class}\label{s:Earle}

We recall here some classical facts about the first cohomology of mapping class groups with twisted coefficients in $H(g)$.
For any $g\geq1$, we recall that gluing a disc with a puncture $\Sigma_{0,1}^{1}$ on the boundary component of $\Sigma_{g,1}$ induces the following short exact sequence:
\begin{equation}\label{eq:Birman_SES_Cap}
1 \longrightarrow
\bZ \longrightarrow
\Gamma_{g,1} \overset{\mathrm{Cap}}\longrightarrow
\Gamma^{1}_{g} \longrightarrow
1,
\end{equation}
where $\Gamma^{1}_{g}$ denotes the mapping class group of the punctured surface $\Sigma_{g}^{1}$. This is called the forgetful exact sequence or the \emph{capping short exact sequence}; we refer to \cite[\S4.2.5]{farbmargalit} for more details.
\begin{lem}\label{lem:iso_capping}
For each $g\geq1$, there is an isomorphism
\begin{equation}\label{eq:iso_capping}
    H^{1}(\Gamma^{1}_{g}; H(g)) \cong H^{1}(\Gamma_{g, 1}; H(g)).
\end{equation}
\end{lem}
\begin{proof}
It is a classical fact that $H(g)^{\Gamma^{1}_{g}} = 0$, which is straightforward to check from the elementary computations of the $\Gamma^{1}_{g}$-action on a standard basis of $H(g)$. Then, the Lyndon-Hochschild-Serre spectral sequence associated to the central extension \eqref{eq:Birman_SES_Cap} induces a Gysin long exact sequence $0\to H^{1}(\Gamma^{1}_{g}; H(g)) \to H^{1}(\Gamma_{g,1}; H(g)) \to H^{0}(\Gamma^{1}_{g}; H(g))\cong 0 \to \cdots$, which thus provides the required isomorphism.
\end{proof}

Furthermore, we make the following computation for the twisted first cohomology of the mapping class group of the torus with one boundary. This fact is probably known to the experts (see \cite{CCS} for instance), but we give a short proof for the convenience of the reader.

\begin{lem}\label{lem:first_twisted_homology_torus_with_boundary}
The cohomology group $H^{1}(\Gamma_{1,1}; H(1))$ is trivial.
\end{lem}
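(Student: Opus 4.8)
The plan is to reduce the computation to the once-punctured torus and then to an explicit calculation for $\mathrm{SL}_2(\bZ)$ via its amalgamated product structure. First I would invoke the capping short exact sequence \eqref{eq:Birman_SES_2} for $g=1$. Since the standard action of $\Gamma_1^1\cong\mathrm{SL}_2(\bZ)$ on $H(1)\cong\bZ^2$ has no nonzero invariant vectors, the vanishing $H(1)^{\Gamma_1^1}=0$ holds, and the Lyndon--Hochschild--Serre argument already recorded above yields a $\mathrm{Cap}$-induced isomorphism $H^1(\Gamma_1^1;H(1))\cong H^1(\Gamma_{1,1};H(1))$. It therefore suffices to show that $H^1(\mathrm{SL}_2(\bZ);\bZ^2)$ vanishes, where $\bZ^2$ carries the standard representation.

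Next I would exploit the classical decomposition $\mathrm{SL}_2(\bZ)\cong\bZ/4\ast_{\bZ/2}\bZ/6$, with $\bZ/4=\langle S\rangle$ generated by $S=\left(\begin{smallmatrix}0&-1\\1&0\end{smallmatrix}\right)$, with $\bZ/6=\langle U\rangle$ generated by $U=\left(\begin{smallmatrix}1&-1\\1&0\end{smallmatrix}\right)$, and with the amalgamated $\bZ/2$ generated by the central element $S^2=U^3=-I$. The associated Mayer--Vietoris sequence in group cohomology with coefficients in $M:=\bZ^2$ has relevant segment $H^0(\bZ/2;M)\to H^1(\mathrm{SL}_2(\bZ);M)\to H^1(\bZ/4;M)\oplus H^1(\bZ/6;M)\xrightarrow{\;\mathrm{res}\;}H^1(\bZ/2;M)$, the last map being the difference of the two restrictions to the common subgroup. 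I would then compute the finite cyclic cohomology groups by means of the standard norm/coboundary description $H^1(\bZ/n;M)=\ker N/\operatorname{im}(t-1)$: for $U$ the matrix $U-I$ has determinant $1$, hence is invertible over $\bZ$, so $H^1(\bZ/6;M)=0$; for $-I$ one finds $H^1(\bZ/2;M)=M/2M$; and since the norm $I+S+S^2+S^3$ vanishes while $\operatorname{im}(S-I)$ has index $2$, one gets $H^1(\bZ/4;M)\cong\bZ/2$. As $H^0(\bZ/2;M)=0$, the sequence shows $H^1(\mathrm{SL}_2(\bZ);M)\cong\ker\big(\mathrm{res}\colon H^1(\bZ/4;M)\to H^1(\bZ/2;M)\big)$.

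The crux---and the step I expect to be the only nonformal one---is to show that this restriction is injective, i.e.\ that the nonzero class of $H^1(\bZ/4;M)$ does not survive. Representing that class by the crossed homomorphism $f$ with $f(S)=e_1$, a generator of $M/\operatorname{im}(S-I)$, its restriction to the common subgroup $\langle S^2\rangle=\langle-I\rangle$ is recorded by $f(S^2)=(I+S)f(S)=e_1+e_2$, whose class in $H^1(\bZ/2;M)=M/2M$ is nonzero because $e_1+e_2$ is primitive. Hence $\mathrm{res}$ is injective, its kernel is trivial, and $H^1(\mathrm{SL}_2(\bZ);M)=0$, which completes the proof. A useful consistency check along the way is that the central involution $-I$ acts on $M$ by $-1$, forcing $2\cdot H^*(\mathrm{SL}_2(\bZ);M)=0$, in agreement with the bound $H^1\hookrightarrow\bZ/2$ obtained from Mayer--Vietoris.
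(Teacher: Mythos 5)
Your proof is correct, but it takes a genuinely different route from the paper's. The paper never leaves $\Gamma_{1,1}$: it invokes the isomorphism $\Gamma_{1,1}\cong\mathbf{B}_3$ with the braid group on three strands, writes down the matrices by which $\sigma_1,\sigma_2$ act on $H(1)\cong\bZ^2$, and checks directly that the braid relation $\sigma_1\sigma_2\sigma_1=\sigma_2\sigma_1\sigma_2$ forces the values $f(\sigma_1),f(\sigma_2)$ of any cocycle to take a special form that is visibly a coboundary. You instead first descend to the quotient $\Gamma_1^1\cong\mathrm{SL}_2(\bZ)$ through the capping sequence \eqref{eq:Birman_SES_2}---a legitimate step, since the paper records the isomorphism $H^1(\Gamma_g^1;H(g))\cong H^1(\Gamma_{g,1};H(g))$ for all $g\geq1$ and $H(1)^{\Gamma_1^1}=0$ indeed holds for the standard action---and then run Mayer--Vietoris over the amalgam $\mathrm{SL}_2(\bZ)\cong\bZ/4\ast_{\bZ/2}\bZ/6$ acting on $M:=H(1)\cong\bZ^2$. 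Your computations all check out: the norm elements of $\langle S\rangle$ and of $\langle -I\rangle$ act by zero, $\det(U-I)=1$ gives $H^1(\bZ/6;M)=0$, $\det(S-I)=2$ gives $H^1(\bZ/4;M)\cong\bZ/2$, and the crux, injectivity of restriction to $\langle S^2\rangle=\langle -I\rangle$, holds because $(I+S)e_1=e_1+e_2$ is primitive and hence nonzero in $M/2M$ (this map on cohomology is well defined since $(I+S)(S-I)=-2I$). The trade-off: the paper's argument is shorter and entirely elementary, needing only a presentation and linear algebra, whereas yours costs more machinery (the Lyndon--Hochschild--Serre reduction, the amalgam decomposition and its Mayer--Vietoris sequence, cyclic group cohomology) but is more structural---it gives the a priori $2$-torsion bound coming from the central $-I$, locates the only potential class inside $H^1(\bZ/4;M)$, and applies verbatim to any $\mathrm{SL}_2(\bZ)$-module rather than just the standard one.
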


\begin{proof}
It is a classical fact that the mapping class group $\Gamma_{1,1}$ is isomorphic to the braid group on three strands $\mathbf{B}_3$, and that the standard generators $\sigma_{1}$ and $\sigma_{2}$ of $\mathbf{B}_3$
act on $H_{1}(\Sigma_{1,1}; \bZ) \cong \bZ^2$ through the isomorphism by $\sigma_{1}\mapsto \begin{pmatrix}1&1\\ 0 & 1\end{pmatrix}$
and 
$\sigma_{2}\mapsto \begin{pmatrix}1&0\\ -1 & 1\end{pmatrix}$
respectively; see for instance \cite[Th.~10.5]{Milnor}.
Let $f$ be a cocycle of $\mathbf{B}_3$ with values in $H_{1}(\Sigma_{1,1}; \bZ) \cong \bZ^2$.
It is determined by the values $f(\sigma_{1}) = \begin{pmatrix}a_{1}\\ b_{1} \end{pmatrix}$ and $f(\sigma_{2}) = \begin{pmatrix}a_{2}\\ b_{2} \end{pmatrix}$. We deduce from the braid relation  $\sigma_{1}\sigma_{2}\sigma_{1} = \sigma_{2}\sigma_{1}\sigma_{2}$ that $b_{1} = a_{2}$ and $b_{1} = 
-a_{2}$. Hence we have $a_{2} = b_{1} = 0$, and thus $f = d^{1}\begin{pmatrix}-b_{2}\\ a_{1} \end{pmatrix}$ is a coboundary.
\end{proof}

Now, we assume that $g\geq2$. We consider the short exact sequence
\begin{equation}\label{eq:Birman_SES_1}
1 \longrightarrow \pi_{1}(\Sigma_{g},x) \longrightarrow \Gamma_{g}^{1} \overset{p}\longrightarrow \Gamma_{g} \longrightarrow 1
\end{equation}
known as the \emph{Birman short exact sequence}; we refer to \cite[\S4.2.1]{farbmargalit} for more details.
Using Lemma~\ref{lem:iso_capping} and \eqref{eq:dual_H}, it follows from the work of Morita \cite[\S4]{MoritaJacobianI} that $H^{1}(\Gamma_{g, 1}; H^{\vee}(g)) \cong H^{1}(\Gamma^{1}_{g};  H^{\vee}(g)) \cong \bZ$, and that the pullback of the generator along the push map
$\pi_{1}(\Sigma_{g},x) \hookrightarrow \Gamma^{1}_{g}$ of \eqref{eq:Birman_SES_1} induces the map in $H^{1}(\Sigma_{g}; H^{\vee}(g))$ which sends the generators of $\pi_{1}(\Sigma_{g},x)$ to $(2-2g) 1_{H(g)}$, where $1_{H(g)}\in H^{1}(\Sigma_{g}; H^{\vee}(g))$ is the cohomology class induced by the identity map $\id_{H(g)}$.

Before this result, Earle \cite{Earle} constructed the generator by using theta constants, and so we call it \emph{the Earle class}. Note that the homotopy long exact sequence of the locally trivial fibration $\bS^{1}\hookrightarrow UT\Sigma_{g}\to\Sigma_{g}$ induces a surjection $\pi_{1}(\varpi)\colon \pi_{1}(UT\Sigma_{g},x) \twoheadrightarrow \pi_{1}(\Sigma_{g},x)$. Then, using the push map
$\pi_{1}(\Sigma_{g},x) \hookrightarrow \Gamma^{1}_{g}$ of \eqref{eq:Birman_SES_1}, the pullback of the Earle class along the composite $\pi_{1}(UT\Sigma_{g},x) \twoheadrightarrow \pi_{1}(\Sigma_{g},x) \hookrightarrow \Gamma^{1}_{g}$ is equal to $(2-2g)$ times the map $1_{H(g)}\circ \pi_{1}(\varpi) \colon \pi_1(UT\Sigma_{g}) \twoheadrightarrow \pi_1(\Sigma_{g}) \to H(g)$ in $H^{1}(UT\Sigma_{g}; H^{\vee}(g))$.
Following Mikio Furuta \cite[p.569]{Morita_casson_invariant}, the Earle class which generates $H^{1}(\Gamma_{g, 1}; H^{\vee}(g)) \cong H^{1}(\Gamma^{1}_{g};  H^{\vee}(g)) \cong \bZ$ is constructed in the following explicit way, as a class of $H^{1}(\Gamma_{g, 1}; H^{\vee}(g))$.
We fix a framing $\xi$ of $\Sigma_{g,1}$. The map $k_{\xi}(g,-): \Gamma_{g,1} \to H^{1}(\Sigma_{g,1}; \bZ)$ defined by
\begin{equation}\label{eq:formula_k}
    k_{\xi}(g,\phi) = \phi\cdot \xi - \xi \in H^{1}(\Sigma_{g,1}; \bZ)
\end{equation}
is a $1$-cocycle of $\Gamma_{g,1}$. Kuno \cite{Kuno} gives a combinatorial formula for the cocycle $k_{\xi}$. We have:
\begin{thm}[{\cite[\S4]{Morita_casson_invariant}}]\label{thm:Earle_k}
For each $g\geq2$, the cohomology class $k(g):= [k_{\xi}(g,-)]$ does not depend on the choice of $\xi$ and generates the infinite cyclic group $H^{1}(\Gamma_{g,1}; H^{\vee}(g))$. It is called the Earle class.
\end{thm}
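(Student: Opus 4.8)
The plan is to proceed in three steps: first verify that $k_{\xi}(g)$ is a $1$-cocycle, then that its class does not depend on $\xi$, and finally that this class generates $H^1(\Gamma_{g,1};H(g))$, using the facts about the Earle class recalled just above. Throughout I silently identify $H^1(\Sigma_{g,1};\bZ)\cong H(g)$ via Poincar\'e--Lefschetz duality, so that $k_\xi(g)$ takes values in the $\Gamma_{g,1}$-module $H(g)$. For the cocycle condition I would use that the $\Gamma_{g,1}$-action on the affine set $\mathscr{F}(\Sigma_{g,1})$ is by affine transformations whose linear part is the $\Gamma_{g,1}$-action on the model $H^1(\Sigma_{g,1};\bZ)$; hence for $\varphi,\psi\in\Gamma_{g,1}$,
\[
k_{\xi}(g,\varphi\psi)=(\varphi\psi)\cdot\xi-\xi=\bigl(\varphi\cdot(\psi\cdot\xi)-\varphi\cdot\xi\bigr)+\bigl(\varphi\cdot\xi-\xi\bigr)=\varphi\cdot k_{\xi}(g,\psi)+k_{\xi}(g,\varphi),
\]
which is exactly the $1$-cocycle identity. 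For independence of $\xi$, if $\xi'$ is another framing with $\xi'-\xi=u\in H^1(\Sigma_{g,1};\bZ)$, then $k_{\xi'}(g,\varphi)-k_{\xi}(g,\varphi)=\varphi\cdot u-u$, which is the coboundary of the $0$-cochain $u$; thus $[k_{\xi'}(g)]=[k_{\xi}(g)]$ and the class $k(g)$ is well defined.

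It remains to show $k(g)$ is a generator. Via the capping isomorphism $H^1(\Gamma^{1}_{g};H(g))\cong H^1(\Gamma_{g,1};H(g))$ and Morita's computation $H^1(\Gamma^{1}_{g};H(g))\cong\bZ$ with generator the Earle class $E$, I may write $k(g)=c\cdot E$ for some $c\in\bZ$, and it suffices to prove $c=\pm1$. To pin down $c$ I would restrict both classes along the disc-pushing map $\pi_1(UT\Sigma_g,p_0)\to\Gamma^{1}_{g}$ (lifted compatibly to $\Gamma_{g,1}$, where $k_\xi(g)$ lives as an honest cocycle). The disc-pushing subgroup acts trivially on $H(g)$, so the target of this restriction is $\mathrm{Hom}(H_1(UT\Sigma_{g,1};\bZ),H(g))$. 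On the one hand, the result recalled above states that $E$ pulls back to $\pm(2-2g)$ times the projection $\mathrm{proj}\colon H_1(UT\Sigma_{g,1};\bZ)\to H(g)$. On the other hand, the pullback of $k_\xi(g)$ can be evaluated directly through the characterisation $(\varphi\cdot\xi-\xi)([\alpha])=\rot_{\xi}(\varphi^{-1}\circ\alpha)-\rot_{\xi}(\alpha)$: the value of the restricted cocycle on a loop $c$ in $UT\Sigma_g$ is the change of rotation number produced by pushing the boundary disc around $c$, which a local index / Gauss--Bonnet computation identifies, via the Euler number $2-2g$ of the unit tangent bundle, with $\pm(2-2g)$ times the value of $\mathrm{proj}$ on $[c]$.

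Comparing the two pullbacks then gives $c\cdot\bigl(\pm(2-2g)\,\mathrm{proj}\bigr)=\pm(2-2g)\,\mathrm{proj}$; since $g\geq2$ the factor $2-2g$ is nonzero and $\mathrm{proj}$ is a surjection onto $H(g)$, hence of infinite order in $\mathrm{Hom}(H_1(UT\Sigma_{g,1};\bZ),H(g))$, so the coefficient forces $c=\pm1$ and $k(g)$ generates $H^1(\Gamma_{g,1};H(g))$. I expect the main obstacle to be precisely the explicit rotation-number computation of the last paragraph: extracting the Euler-characteristic factor $\pm(2-2g)$ from the disc-pushing action and matching the signs against Morita's normalisation and the Poincar\'e-duality identification $H^1(\Sigma_g;H(g))\cong\mathrm{End}(H(g))$. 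Everything else is formal, reducing to the affine structure on $\mathscr{F}(\Sigma_{g,1})$ and the already-cited generation result for $E$.
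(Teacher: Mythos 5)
Your overall strategy matches the paper's proof: reduce to Morita's computation that $H^1(\Gamma^1_g;H(g))\cong\bZ$, with the generator detected by push maps as $\pm(2-2g)$ times the projection, and then identify $k(g)$ by computing its restriction to push maps. Your formal steps are fine: the cocycle identity from the affine structure of $\mathscr{F}(\Sigma_{g,1})$, and the coboundary argument for independence of $\xi$ (this last point is in fact handled more explicitly by you than by the paper, where independence only falls out of the final computation because the answer $(2-2g)[C]$ does not involve $\xi$). The comparison-of-pullbacks argument at the end is also sound, given its inputs.

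The genuine gap is exactly the step you flag as "the main obstacle": the claim that $k_\xi(g)$ restricts on the disc-pushing subgroup to $\pm(2-2g)\,\mathrm{proj}$ is the entire content of the theorem, and an appeal to "a local index / Gauss--Bonnet computation" is not an argument --- everything else in your proposal is bookkeeping around this unproved assertion. The paper's proof is devoted precisely to filling it, with three concrete ingredients: (i) the rotation-number formula for a right-handed Dehn twist, $\rot_\xi(T_C(\alpha))-\rot_\xi(\alpha)=([\alpha]\cdot[C])\,\rot_\xi(C)$, which gives $k_\xi(g,T_C)=-(\rot_\xi(C))[C]$; (ii) the factorization of the disc-push along a curve $C$ passing near the boundary as $T_{C_1}^{-1}T_{C_2}$, where $C_1$ and $C_2$ cobound a pair of pants with $\partial\Sigma_{g,1}$, whence $k_\xi(g,T_{C_1}^{-1}T_{C_2})=(\rot_\xi(C_1)-\rot_\xi(C_2))[C]$; and (iii) the Poincar\'e--Hopf theorem on that pair of pants, $-\rot_\xi(C_1)+\rot_\xi(C_2)+\rot_\xi(\partial\Sigma_{g,1})=-1$, together with $\rot_\xi(\partial\Sigma_{g,1})=\chi(\Sigma_{g,1})=1-2g$, yielding $k_\xi(g,\text{push along }C)=(2-2g)[C]$. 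This is how the Euler-characteristic factor is actually extracted; until you carry out such a computation (or an honest version of your Gauss--Bonnet argument), the identity $c\cdot(\pm(2-2g))=\pm(2-2g)$ you want to conclude with has no independently computed left-hand side, and generation of $H^1(\Gamma_{g,1};H(g))$ by $k(g)$ is not established.
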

\begin{proof}By the computation of \cite{MoritaJacobianI}, 
it suffices to compute the value of $k_{\xi}(g,-)$ at a push map.
Such a computation was carried out in the original proof of \cite[Prop.~4.1]{Morita_casson_invariant}.
However, for the sake of completeness, we give another explicit computation. Let $T_C \in \Gamma_{g,1}$ be the right-handed Dehn twist 
along an oriented simple closed curve $C$ of $\Sigma_{g,1}$. 
Then, it follows from some elementary considerations that for any immersed curve $\alpha$
$$\rot_\xi(T_C(\alpha)) - \rot_\xi(\alpha) 
= ([\alpha]\cdot[C]) \rot_\xi(C),$$
where $[\alpha]\cdot[C]$ denotes the (algebraic) intersection number of the homology classes $[\alpha]$ and $[C]$. 
We deduce that $\xi\circ dT_C - \xi = (\rot_\xi(C))[C]^{\vee}$ and in particular that
\begin{equation}\label{eq:rot_formula}
k_{\xi}(g,T_C) = \xi\circ dT_C^{-1} - \xi
= - (\rot_{\xi\circ dT_C^{-1}}C)[C]^{\vee} = - (\rot_\xi (C))[C]^{\vee}\in H^{1}(\Sigma_{g,1}; \bZ).
\end{equation}
Assume that the curve $C$ passes near the boundary of $\Sigma_{g,1}$. Then, fattening the union of the boundary and the curve $C$, we obtain  a pair of pants embedded in $\Sigma_{g,1}$ whose three boundary components are given as follows: one is parallel to the boundary of $\Sigma_{g,1}$, and the other two simple closed curves $C_{1}^{-1}$ and $C_{2}$ are parallel to $C$ except near the boundary. 
Then the push map along $C$ is equal to $T_{C_{1}}^{-1}{T_{C_{2}}}\in \Gamma_{g,1}$. Since the curves $C_{1}$, $C_{2}$ and $C$ are disjoint (so their associated Dehn twists act trivially on each one of these curves) and represent the same homology class in $H_{1}(\Sigma_{g,1}; \bZ)$, we deduce from the formula \eqref{eq:rot_formula} that
$$k_{\xi}(g,T_{C_{1}}^{-1}{T_{C_{2}}}) = (-\rot_\xi (C_{2}) + \rot_\xi (C_{1}))[C]^{\vee}.$$
By the Poincar{\'e}-Hopf theorem, $- \rot_\xi (C_{1}) + \rot_\xi (C_{2}) + \rot_\xi (\pa \Sigma_{g,1})$ is equal to $-1$, the Euler characteristic of the pair of pants, and $\rot_\xi (\pa \Sigma_{g,1}) = \chi(\Sigma_{g,1}) = 1-2g$.
Hence we have $k_{\xi}(g,T_{C_{1}}^{-1}{T_{C_{2}}}) = (2-2g)[C]^{\vee}$, which ends the proof. 
\end{proof} 

\begin{notation}\label{not:0_1_trivial_crossed_hom}
Since $H_{1}(\Sigma_{0,1}; \bZ)=0$, we have $H^{1}(\Gamma_{0,1}, H_{1}(\Sigma_{0,1}; \bZ))=0$ and we know from Lemma~\ref{lem:first_twisted_homology_torus_with_boundary} that $H^{1}(\Gamma_{1,1}, H_{1}(\Sigma_{1,1}; \bZ))$ is also trivial.
We thus define $k(0)\in H^{1}(\Gamma_{0,1}, H_{1}(\Sigma_{0,1}; \bZ))$ and $k(1)\in H^{1}(\Gamma_{1,1}, H_{1}(\Sigma_{1,1}; \bZ))$ to be the zero cohomology classes, represented by the zero cocycles $\Gamma_{0,1} \to H^{1}(\Sigma_{0,1}; \bZ)$ and $\Gamma_{1,1} \to H^{1}(\Sigma_{1,1}; \bZ)$.
\end{notation}

Finally, we note the following additive property of the crossed homomorphisms $k(g)$:
\begin{lem}\label{lem:glued_crossed_hom}
Let $g$ and $g'$ be two natural numbers, and let $\xi$ and $\xi'$ be framings on $\Sigma_{g,1}$ and $\Sigma_{g',1}$. There exists a framing $\xi' \natural \xi$ on $\Sigma_{g'+g,1}$ such that the cohomology class $k(g'+g)$ is induced by the $1$-cocycle $k_{\xi' \natural \xi}(g'+g,-)$ defined by the formula \eqref{eq:formula_k}.
Then, for all $\phi\in\Gamma_{g,1}$ and $\phi'\in\Gamma_{g',1}$, we have $$k_{\xi' \natural \xi}(g'+g,\phi'\natural\phi)= k_{\xi'}(g',\phi') + k_{\xi}(g,\phi).$$
\end{lem}
\begin{proof}
Let $N$ and $N'$ be neighbourhood s of the parametrised intervals $I_{\Sigma_{g,1}}^{+}$ and $I_{\Sigma_{g',1}}^{-}$ respectively. Since each of the intervals is contractible, we can choose the framings $\xi$ and $\xi'$ such that $\ensuremath{\xi}'_{\mid I_{\Sigma_{g',1}}^{-}}=\ensuremath{\xi}_{\mid I_{\Sigma_{g,1}}^{+}}$ and their union define a framing of $N'\natural N$. Hence we define an appropriate framing $\xi' \natural \xi$ on $\Sigma_{g'+g,1}$, which thus induces the cocycle $k_{\xi' \natural \xi}(g'+g,-)$.
We then deduce from the formula \eqref{eq:formula_k} defining the cocycles $k_{\xi}(g,-)$, $k_{\xi'}(g',-)$ and $k_{\xi' \natural \xi}(g'+g,-)$ that 
$k_{\xi' \natural \xi}(g'+g,\phi'\natural\phi)=(\phi'\cdot\xi'-\xi')\natural(\phi\cdot\xi-\xi)=k_{\xi'}(g',\phi')+k_{\xi}(g,\phi)$.
\end{proof}

\subsection{The unit tangent bundle homology representations}\label{sec:unit_tangent_bundle_representations}

We consider the first integral homology group of the unit tangent bundle $UT\Sigma_{g,1}$, denoted by $\tilde{H}(g)$. Since $\tilde{H}(g)\cong \bZ^{2g+1}$ is torsion-free as an abelian group, the dual $\tilde{H}^{\vee}(g)$ is isomorphic to the first integral cohomology group $H^{1}(UT\Sigma_{g,1}; \bZ)$ by the universal 
coefficient theorem for cohomology of spaces (see \cite[Ex.~3.6.7]{Weibel}). Then the Serre spectral sequence of the locally trivial fibration $\bS^{1}\overset{\iota}{\hookrightarrow}UT\Sigma_{g,1}\overset{\varpi}{\to}\Sigma_{g,1}$ provides the following $\Gamma_{g,1}$-equivariant short exact sequences:
\begin{equation}\label{SEShutb}
\xymatrix{0\ar@{->}[r] & \bZ\ar@{->}[r]^-{\iota_{*}} & \tilde{H}(g)\ar@{->}[r]^-{\varpi_{*}} & H(g)\ar@{->}[r] & 0,}
\end{equation}
\begin{equation}\label{SESchutb}
\xymatrix{0\ar@{->}[r] & H^{\vee}(g)\cong H(g)\ar@{->}[r]^-{\varpi^{*}} & \tilde{H}^{\vee}(g)\ar@{->}[r]^-{\iota^{*}} & \bZ\ar@{->}[r] & 0.}
\end{equation}
We also have the analogue short exact sequences to \eqref{SEShutb} and \eqref{SESchutb} with the rational versions $H_{\bQ}(g)$, $\tilde{H}_{\bQ}(g)$ and $\tilde{H}^{\vee}_{\bQ}(g)$ of the homology groups $H(g)$, $\tilde{H}(g)$ and $\tilde{H}^{\vee}(g)$ respectively.

In addition, Trapp \cite[Th.~2.2]{Trapp} describes more precisely the $\Gamma_{g,1}$-module structures of $\tilde{H}(g)$ and $\tilde{H}^{\vee}(g)$. Namely, for an element $\phi\in\Gamma_{g,1}$, the action of $\phi$ on $\tilde{H}(g)$ is given by the matrix
\begin{equation}\label{eq:matrix_action}
\left[\begin{array}{cc}
\id_{\bZ} & k_{\xi}(g,\phi) \\
(0) & H(\phi)
\end{array}\right]    
\end{equation}
where $H(\phi)$ denotes the action of $\phi$ on $H(g)$ and $k_{\xi}(g,-)$ is the $1$-cocycle associated to a fixed framing $\xi$ of $\Sigma_{g,1}$ defining the cohomology class $k(g)$ of Theorem~\ref{thm:Earle_k} and Notation~\ref{not:0_1_trivial_crossed_hom}; see \S\ref{s:Earle}. The kernel of $\id_{\bZ}\oplus H(g)$ under the $\Gamma_{g,1}$-action is that of $H(g)$ (i.e. the Torelli group), while it follows from \cite[Cor.~2.5]{Trapp} that the kernel of the $\Gamma_{g,1}$-representation $\tilde{H}(g)$ is strictly smaller for $g\geq2$: namely, it corresponds to the \emph{Johnson kernel} for $g=2$ \cite[Rem.~after Cor.~2.7]{Trapp} and to the \emph{Chillingworth subgroup} for $g\geq3$ by \cite[Cor.~2.7]{Trapp}, which are both proper subgroups of the Torelli group for $g\geq2$ (see \cite{ChillingworthI,ChillingworthII,Johnsonabelian,Johnsonsurvey} for instance).
Therefore, the short exact sequences \eqref{SEShutb} and \eqref{SESchutb} do not split as $\Gamma_{g,1}$-extensions for $g\geq2$, although they do for $g=1$ by Lemma~\ref{lem:first_twisted_homology_torus_with_boundary}. Moreover, as a consequence of the computations of \S\ref{sec:first_homology_group_systems}, the dual $\tilde{H}^{\vee}(g)$ is not isomorphic to $\tilde{H}(g)$ as a $\Gamma_{g,1}$-representation for $g\geq 5$; see Theorem~\ref{thm:tilde_H_not_self_dual}.

Recall from Theorem~\ref{thm:Earle_k} that the Earle cohomology class $k(g)$ is the generator of $H^{1}(\Gamma_{g,1}; H^{\vee}(g))\cong\Ext_{\bZ[\Gamma_{g,1}]}^{1}(\bZ,H^{\vee}(g))\cong \bZ$. Since \eqref{SEShutb} and \eqref{SESchutb} are non-trivial extensions (for $g\geq2$) and in view of the action matrix \eqref{eq:matrix_action}, it follows from the correspondence of $\Gamma_{g,1}$-extensions of $\bZ$ by $H^{\vee}(g)$ with the classes of $\Ext_{\bZ[\Gamma_{g,1}]}^{1}(\bZ,H^{\vee}(g))$ (see \cite[Th.~3.4.3]{Weibel} for instance) that the Earle cohomology class $k(g)$ is the one of the extension \eqref{SESchutb}, while its formal dual $k^{\vee}(g)$ in $\Ext_{\bZ[\Gamma_{g,1}]}^{1}(H(g),\bZ)$ is the extension class of the short exact sequence \eqref{SEShutb}.

\section{Cohomological stability framework and tools}\label{s:stab_hom_framework}

In this section, we review the notions of \emph{coefficient systems}, \emph{polynomiality} and \emph{homological stability with twisted coefficients} with respect to the framework of the present paper. These highlight the mainspring of the results of \S\ref{sec:first_homology_group_systems}.

\subsection{Twisted coefficient systems}\label{sec:the_category_UM}

First of all, we present the suitable category $\cM_{2}$ to encode \emph{compatible representations} of the mapping class groups. It is equivalent to the one introduced in \cite[\S5.6.1]{WahlRandal-Williams}. We refer to \cite[\S3.1]{soulieLMgeneralised} or \cite[\S1.1.2.1.]{PSIIp} for further details of the construction of $\cM_{2}$. We consider the groupoid $\mathscr{M}_{2}$ defined by the smooth, compact, connected, orientable surfaces $S$ with one boundary component along with a parametrised interval in the boundary, and the isotopy classes of diffeomorphisms restricting to the identity on a neighbourhood  of the parametrised interval for the morphisms.
The groupoid $\mathscr{M}_{2}$ has a braided monoidal structure $(\natural,b^{\mathscr{M}_{2}}_{-,-})$ induced by the boundary connected sum on half of the marked interval; see \cite[\S5.6.1]{WahlRandal-Williams} for further details.
We fix a $2$-disc $\bD^{2}$ and a torus with one boundary component that we denote by $\Sigma_{1,1}$. Let $\cM_{2}$ be the (skeletal) full subgroupoid of $\mathscr{M}_{2}$ on the monoidal sums on the objects $\bD^{2}$ and $\Sigma_{1,1}$, modulo the identification that $\bD^{2}\natural \Sigma_{1,1}=\Sigma_{1,1}\natural \bD^{2}=\Sigma_{1,1}$. In particular, the objects of $\cM_{2}$ are in bijection with $\bN$ and we use the standard notation $\Sigma_{g,1}$ for the object $\Sigma_{1,1}^{\natural g}$, and its associated morphisms are the mapping class groups $\Gamma_{g,1}$. For simplicity, we often identify the surface $\Sigma_{g,1}$ with its indexing integer $g$, especially when applying a functor on that object. The groupoid $\cM_{2}$ inherits the braided monoidal structure $\natural$, which becomes \emph{strict} over $\cM_{2}$, with $\Sigma_{0,1}=\bD^{2}$ as unit and the braiding $b^{\cM_{2}}_{-,-}$ given by natural automorphisms.

Let $\cU\cM_{2}$ be the category, called the \emph{Quillen's bracket construction} over $\cM_{2}$, with the same objects as $\cM_{2}$ and whose morphisms $\cU\cM_{2}(\Sigma_{g,1},\Sigma_{g',1})$ are given by the colimit $\mathrm{colim}_{\cM_{2}}[\cM_{2}(-\natural \Sigma_{g,1},\Sigma_{g',1})]$.
Namely, a morphism $\Sigma_{g,1}\to\Sigma_{g',1}$ is given by an equivalence class $[\Sigma_{g'-g,1},\phi]$ of pairs $(\Sigma_{g'-g,1},\phi)$ with  $\phi\in \Gamma_{g',1}$, where $(\Sigma_{g'-g,1},\phi)\sim(\Sigma_{g'-g,1},\phi')$ if there is a $\psi\in \Gamma_{g'-g,1}$ such that $\phi'\circ (\psi\natural \id_{\Sigma_{g,1}})=\phi$.
This definition is a particular output of a general construction of \cite{graysonQuillen}; we refer to \cite[\S1.1]{WahlRandal-Williams} for further details. By \cite[Prop.~1.8]{WahlRandal-Williams}, the category $\cU\cM_{2}$ inherits the monoidal structure $\natural$ from $\cM_{2}$ and $\bD^{2}$ is an initial object; it is however not braided monoidal but \emph{pre-braided} in the sense of \cite[Def.~1.5]{WahlRandal-Williams}. As we will see in \S\ref{sec:homological_stability}, this type of category is very useful to deal with cohomological stability questions.

We may now encode \emph{compatible representations} of mapping class groups by considering functors with the category $\cU\cM_{2}$ as a source and a module category as target. We distinguish two types of such functors because of their distinct qualitative properties with respect to homological stability detailed in \S\ref{sec:homological_stability}.
A \emph{covariant system} (resp. \emph{contravariant system)} over $\cU\cM_{2}$ is a functor $F\colon \cU\cM_{2} \to \Ab$ (resp. $F^{\vee}\colon\cU\cM_{2}^{\opp} \to \Ab$).
That $(\cU\cM_{2},\natural)$ is a monoidal category where $\bD^{2}$ is an initial object and that $\Ab$ is an abelian category ensure the existence of a functor $\delta(F)\colon \cU\cM_{2} \to \Ab$ defined by $\Sigma_{g,1}\mapsto (\Cok(F(\Sigma_{g,1}) \to F(\Sigma_{1,1} \natural \Sigma_{g,1}))$ on objects, called the \emph{difference functor} of $F$; see \cite[\S4.1]{soulieLMgeneralised}.
We now recursively define the notion of \emph{polynomiality} for covariant systems as follows:
\begin{itemizeb}
\item the constant functors $\cU\cM_{2} \to \Ab$ are the polynomial covariant systems of degree $0$;
\item for a natural number $d\geq1$, the functor $F\colon \cU\cM_{2} \to \Ab$ is a 
polynomial covariant system of degree less than or equal to $d$ if the morphism $F([\Sigma_{1,1},\id_{\Sigma_{1,1}\natural \Sigma_{g,1}}])$ is injective for each surface $\Sigma_{g,1}$ of $\cU\cM_{2}$, and the difference functor $\delta(F)$ is a polynomial covariant system of degree less than or equal to $d-1$.
\end{itemizeb}
\begin{eg}\label{eg:Z_H_poly_functor}
The constant functor at $\bZ$ defines a polynomial functor $\bZ\colon\cU\cM_{2} \to \Ab$ of degree $0$.
A first example of a non-trivial polynomial covariant system is given by the first homology group of the surfaces. Namely, assigning the first integral homology group to each surface define a functor $H\colon(\cU\cM_{2},\natural,\bD^{2}) \to (\Ab,\oplus,0)$, which is \emph{strong monoidal} in the sense that $H(g'\natural g)\cong H(g')\oplus H(g)$ and $H(\phi'\natural\phi)\cong H(\phi')\oplus H(\phi)$ for all $g,g'\geq1$, $\phi\in\Gamma_{g,1}$ and $\phi'\in\Gamma_{g',1}$; see for instance \cite[Def.~2.8, Lem.~2.9]{soulie3} for a detailed proof. In particular, $H\colon\cU\cM_{2} \to \Ab$ is a polynomial covariant system of degree $1$.
\end{eg}
Furthermore, the first homology groups of the unit tangent bundle of the surfaces along with the natural action of the mapping class groups (see \S\ref{sec:unit_tangent_bundle_representations}) define a functor $\tilde{H}\colon\cM_{2} \to \Ab$.
\begin{prop}\label{prop:tilde_H_functor}
The functor $\tilde{H}\colon\cM_{2} \to \Ab$ lifts to a covariant system $\tilde{H}\colon\cU\cM_{2} \to \Ab$, which is polynomial of degree $1$.
\end{prop}
\begin{proof}
Firstly, we note that any morphism $[\Sigma_{g'-g,1},\phi]$ of $\cU\cM_{2}$ may be written as the composite $\phi\circ [\Sigma_{g'-g,1},\id_{\Sigma_{g',1}}]$, where $[\Sigma_{g'-g,1},\id_{\Sigma_{g',1}}]$ is the class of the diffeomorphisms of $\Sigma_{g',1}$ which are the identity the subsurface $\Sigma_{g,1}$ of $\Sigma_{g'-g,1}\natural \Sigma_{g,1}=\Sigma_{g',1}$.
Hence, the category $\cU\cM_{2}$ is isomorphic to the category with the same objects, and whose morphisms are isotopy classes of smooth embeddings $e_{g,g'}\colon \Sigma_{g,1}\hookrightarrow \Sigma_{g'-g,1}\natural \Sigma_{g,1}=\Sigma_{g',1}$ preserving an interval on the boundary. Recall that the unit tangent bundles define an endofunctor $UT$ of the category of differential manifolds with smooth maps, while the first homology groups define a functor $H_{1}(-;\bZ)$ from the category of topological spaces to $\Ab$. The composite $H_{1}(UT(-);\bZ)$ clearly preserves the isotopy classes of embeddings, and thus induces a lift $\cU\cM_{2} \to \Ab$ of $\tilde{H}\colon\cM_{2} \to \Ab$.
Moreover, since the functor $H\colon\cU\cM_{2} \to \Ab$ is defined similarly with respect to the embeddings $e_{g,g'}$, the projections $\varpi_{*,g}\colon\tilde{H}(g)\twoheadrightarrow H(g)$ commute with the maps $\tilde{H}(e_{g,g'})$ and $H(e_{g,g'})$, that is $\varpi_{*,g'}\circ\tilde{H}(e_{g,g'})=H(e_{g,g'})\circ \varpi_{*,g}$. Therefore, the short exact sequence \eqref{SEShutb} induces a short exact sequence $0\to\bZ\to\tilde{H}\to H\to 0$ of functors $\cU\cM_{2} \to \Ab$. Since covariant systems of degree less or equal to $1$ are closed under extensions (see \cite[Prop.~4.4]{soulieLMgeneralised}) and the functor $\tilde{H}$ is clearly not constant, we deduce that the covariant system $\tilde{H}$ is polynomial of degree $1$.
\end{proof}

\subsection{Twisted cohomological stability and stable (co)homology}\label{sec:homological_stability}

In this section, we review some classical results on cohomological stability with twisted coefficients for mapping class groups. In particular, these prove that all the twisted coefficient systems we consider in this paper satisfy the cohomological stability property and thus motivate the computations of \S\ref{sec:first_homology_group_systems}. Also, we recall some results on the stable cohomology of mapping class groups with twisted coefficients, which will be used for the work of \S\ref{sec:first_homology_group_systems}.

\subsubsection{Classical framework}\label{ss:covariant_systems_TSH}
We recollect the classical results on cohomological stability and stable twisted cohomology for mapping class groups in the following paragraphs.

\paragraph*{Twisted (co)homological stability framework.}
The following classical result illustrates how polynomial covariant systems turn out to be very useful for (co)homological stability problems. For each $g\geq 0$, we denote by $\mathfrak{i}_{g} \colon\Gamma_{g,1}\hookrightarrow \Gamma_{g+1,1}$ the canonical injection induced by embedding $\Sigma_{g,1}$ as a subsurface of $\Sigma_{g+1,1}\simeq \Sigma_{1,1} \natural  \Sigma_{g,1}$, and by extending the diffeomorphisms of $\Sigma_{g,1}$ by the identity on the complement $\Sigma_{1,1}$.

\begin{thm}[{\cite[Th.~4.1]{Ivanov}, \cite[Th.~5.26]{WahlRandal-Williams}}]\label{thm:hom_stab_MCG}
Let $F\colon \cU\cM_{2} \to \Ab$ be a polynomial covariant system of degree $d$. For each $g,i\geq0$, let $\Phi_{i,g}(F)$ denote the canonical map $H_{i}(\Gamma_{g,1};F(g))\to  H_{i}(\Gamma_{g+1,1};F(g+1))$ induced by the injection $\mathfrak{i}_{g} \colon \Gamma_{g,1}\hookrightarrow\Gamma_{g+1,1}$ and the $\Gamma_{g,1}$-equivariant morphism $F([\Sigma_{1,1},\id_{\Sigma_{1+g,1}}])\colon F(g)\to F(g+1)$. If $g\geq2i+2d+3$, then $\Phi_{i,g}(F)$ is an isomorphism.
\end{thm}

In order to rephrase this result in terms of cohomology groups, we take this opportunity to recall and prove the following version of the Universal Coefficient Theorem (for which it is difficult to find a reference), in order to make the connection between the $\Gamma_{g,1}$-modules $\tilde{H}(g)$ and $\tilde{H}^{\vee}(g)$.
\begin{lem}\label{lem:UCT}
Let $G$ be a group, $R$ a principal ideal domain and $M$ a left $R[G]$-module which is free as a $R$-module.  We denote by $M^{\vee}$ the dual right $R[G]$-module $\Hom_{R}(M,R)$. Then there is a natural short exact sequence admitting a non-canonical splitting:
\begin{equation}\label{eq:ses_UCT}
\xymatrix{0\ar@{->}[r] & \Ext_{R}^{1}(H_{i-1}(G;M),R)\ar@{->}[r] & H^{i}(G;M^{\vee})\ar@{->}[r] & \Hom_{R}(H_{i}(G;M),R)\ar@{->}[r] & 0.}
\end{equation}
\end{lem}
\begin{proof}
Let $P_{\bullet}\to R$ be a projective right $R[G]$-module resolution. Then $\Hom_{R[G]}(P_{\bullet},M^{\vee})$ is a cochain complex computing $H^{*}(G;M^{\vee})$. The tensor-hom adjunction provides a natural isomorphism $\Hom_{R[G]}(P_{\bullet},M^{\vee})\cong\Hom_{R}(P_{\bullet}\otimes_{R[G]}M,R)$.
Since a submodule of a free module over a principal ideal domain is free, all the terms of the resolution $P_{\bullet}$ are $R$-free. Also, for each $i\geq0$, since $P_{i}$ is a projective $R[G]$-module, there exists a $R[G]$-module $Q_{i}$ is such that $P_{i}\oplus Q_{i}$ is a free $R[G]$-module. Then, since the $R$-module $M$ is assumed to be $R$-free, each $R$-module module $(P_{i}\oplus Q_{i})\otimes_{R[G]}M$ is free as it is isomorphic to the a direct sum of copies of $M$. So all the terms of the resolution $P_{\bullet}\otimes_{R[G]}M$ are $R$-free, again because a submodule of a free module over a principal ideal domain is free.
Therefore, the result follows from applying the Universal Coefficient Theorem 
for chain complexes over a principal ideal
domain (see \cite[Th.~3.6.5]{Weibel} for instance) on the right-hand side of the isomorphism.
\end{proof}
In particular, in the setting of Lemma~\ref{lem:UCT}, if the $R$-module $H_{i-1}(G;M)$ is torsion-free (and thus flat since $R$ is a principal ideal domain), then it follows from the short exact sequence \eqref{eq:ses_UCT} that there is a natural isomorphism $H^{i}(G;M^{\vee}) \cong (H_{i}(G;M))^{\vee}$.
Therefore, for a \emph{covariant} functor $F\colon \cU\cM_{2} \to \bK\Mod\to \Ab$ where $\bK$ is a field, there are isomorphisms for each $i\geq0$:
\begin{equation}\label{eq:homology_covariant/cohomology_contravariant}
H^{i}(\Gamma_{g,1};F^{\vee}(g))\cong (H_{i}(\Gamma_{g,1};F(g)))^{\vee}
\qquad\mathrm{and}\qquad  
H^{i}(\Gamma_{g,1};F(g))\cong (H_{i}(\Gamma_{g,1};F^{\vee}(g)))^{\vee}.
\end{equation}
Furthermore, we deduce a twisted \emph{cohomological} stability result from Theorem~\ref{thm:hom_stab_MCG} and Lemma~\ref{lem:UCT}:
\begin{prop}\label{prop:hom_stab_MCG}
Let $F\colon \cU\cM_{2} \to \Ab$ be a polynomial covariant system of degree $d$. For each $g,i\geq0$, let $\Phi'_{i,g}(F)$ denote the canonical map $H^{i}(\Gamma_{g+1,1};F^{\vee}(g+1)) \to H^{i}(\Gamma_{g,1};F^{\vee}(g))$ induced by the canonical injection $\mathfrak{i}_{g} \colon \Gamma_{g,1}\hookrightarrow\Gamma_{g+1,1}$ and the $\Gamma_{g,1}$-equivariant morphism $F([\Sigma_{1,1},\id_{\Sigma_{1+g,1}}])\colon F(g)\to F(g+1)$. If $g\geq2i+2d+3$, then $\Phi'_{i,g}(F)$ is an isomorphism.
\end{prop}
\begin{proof}
We recall that $\Ext_{R}^{1}(-,R)\colon R\Mod \to R\Mod$ is a contravariant functor (see \cite[Ex~2.5.3]{Weibel} for instance), as well as the duality functor $-^{\vee}$. Therefore, the maps $\Phi_{i-1,g}(F)$ and $\Phi_{i,g}(F)$ of Theorem~\ref{thm:hom_stab_MCG} induce maps $ \Ext_{R}^{1}(\Phi_{i-1,g}(F),R)\colon \Ext_{\bZ}^{1}(H_{i-1}(\Gamma_{g+1,1};F(g+1)),\bZ) \to \Ext_{R}^{1}(\Phi_{i-1,g}(F),R)$ and $(\Phi_{i,g}(F))^{\vee}\colon (H_{i}(\Gamma_{g+1,1};F(g+1)))^{\vee} \to (H_{i}(\Gamma_{g,1};F(g)))^{\vee}$, which are isomorphisms for $g\geq2i+2d+3$. 
Moreover, by Lemma~\ref{lem:UCT}, the short exact sequence \eqref{eq:ses_UCT} is natural (in a contravariant way) with respect to the maps $\mathfrak{i}_{g}$ and $F([\Sigma_{1,1},\id_{\Sigma_{1+g,1}}])$. Hence we obtain the following commutative diagram for each $g\geq0$, where the rows are short exact sequences and the left-hand and right-hand vertical maps are ismorphisms in the stable range:
$$\xymatrix{\Ext_{\bZ}^{1}(H_{i-1}(\Gamma_{g+1,1};F(g+1)),\bZ) \ar@{^{(}->}[r]  \ar@{->}[d]^-{\Ext_{R}^{1}(\Phi_{i-1,g}(F),R)}_{\cong}
& H^{i}(\Gamma_{g+1,1};F(g+1))\ar@{->}[d]^-{\Phi'_{i,g}(F)} \ar@{->>}[r]
& (H_{i}(\Gamma_{g+1,1};F(g+1)))^{\vee} \ar@{->}[d]^-{(\Phi_{i,g}(F))^{\vee}}_{\cong}\\
\Ext_{\bZ}^{1}(H_{i-1}(\Gamma_{g,1};F(g)),\bZ)\ar@{^{(}->}[r]
& H^{i}(\Gamma_{g,1};F(g))\ar@{->>}[r]
& (H_{i}(\Gamma_{g,1};F(g)))^{\vee}.}$$
The results thus follows from the five lemma.
\end{proof}

\begin{eg}\label{eg:first_homology_poly_deg_one}
We recall from \S\ref{sec:the_category_UM} that the groups $\{H_{1}(\Sigma_{g,1};\bZ),g\in\bN\}$ and $\{H_{1}(UT\Sigma_{g,1};\bZ),g\in\bN\}$ respectively define polynomial covariant systems of degree $1$ $H\colon\cU\cM_{2} \to \Ab$ and $\tilde{H}\colon\cU\cM_{2} \to \Ab$ (see Proposition~\ref{prop:tilde_H_functor}), while the constant functor $\bZ\colon\cU\cM_{2} \to \Ab$ is polynomial of degree $0$. So, by Proposition~\ref{prop:hom_stab_MCG}, there are isomorphisms $H^{i}(\Gamma_{g+1,1};H^{\vee}(g+1))\cong H_{i}(\Gamma_{g,1};H^{\vee}(g))$ and $H_{i}(\Gamma_{g+1,1};\tilde{H}^{\vee}(g+1))\cong H^{i}(\Gamma_{g,1};\tilde{H}^{\vee}(g))$ for $g\geq2i+5$, and there is an isomorphism $H^{i}(\Gamma_{g+1,1};\bZ)\cong H_{i}(\Gamma_{g,1};\bZ)$ for $g\geq2i+3$. 
\end{eg}

\paragraph*{Stable twisted cohomology.}
Let $F\colon \cU\cM_{2} \to \Ab$ be a covariant system. As in Proposition~\ref{prop:hom_stab_MCG} we denote by $\Phi'_{i,g}(F)\colon H^{i}(\Gamma_{g+1,1};F^{\vee}(g+1)) \to H^{i}(\Gamma_{g,1};F^{\vee}(g))$ the canonical map induced by $\mathfrak{i}_{g} \colon \Gamma_{g,1}\hookrightarrow\Gamma_{g+1,1}$ and the $\Gamma_{g,1}$-equivariant morphism $F^{\vee}([\Sigma_{1,1},\id_{\Sigma_{1+g,1}}])\colon F^{\vee}(g+1)\to F^{\vee}(g)$ for each $g,i\geq0$.
\begin{defn}\label{def:homological_stability_covariant}
For each $i\geq0$, the inverse limit $\underleftarrow{\lim}_{g\geq0}H^{i}(\Gamma_{g,1}; F^{\vee}(g))$ induced by the maps $\{\Phi'_{i,g}(F)\}_{g\in\bN}$ is called the \emph{stable cohomology group}. It is denoted by $H^{i}(\Gamma_{\infty,1};F^{\vee})$, that we abbreviate to $H_{\st}^{i}(F^{\vee})$ when everything is clear from the context.  Moreover, there is a canonical $H_{\st}^{*}(\bZ)$-module structure on the cohomology groups $H_{\st}^{*}(F):=\bigoplus_{i\geq0}H_{\st}^{i}(F)$, induced by the cup product and the clear compatibility of the stabilisation maps $\{\Phi'_{i_{1},g}(\bZ),\Phi'_{i_{2},g}(F)\}_{i_{1}, i_{2}, g\in\bN}$ with respect to that operation.
\end{defn}
In particular, when $F$ satisfies the assumptions of Proposition~\ref{prop:hom_stab_MCG}, the value of the twisted cohomology group $H^{i}(\Gamma_{g,1},F^{\vee}(g))$ for $g\geq N(i,F)$ is isomorphic to $H_{\st}^{i}(F^{\vee})$.

Moreover, the second author proves in \cite{soulie3} a general decomposition for the stable cohomology of the mapping class groups with twisted coefficients given by the dual of a covariant system. For $M\colon \cU\cM_{2} \to \Ab$ covariant system, the colimit $\mathrm{colim}_{g\in(\bN,\leq)}(H_{i}(\Gamma_{g,1};M(g)))$ induced by the canonical injection $\mathfrak{i}_{g} \colon \Gamma_{g,1}\hookrightarrow\Gamma_{g+1,1}$ and the $\Gamma_{g,1}$-equivariant morphism $M([\Sigma_{1,1},\id_{\Sigma_{1+g,1}}])$ is denoted by  $H^{\st}_{i}(M)$; it is called the \emph{stable homology group}. As an application of \cite[Th.~C]{soulie3}, we have:
\begin{thm}\label{thm:stable_functor_homology}
Let $\bK$ be a field such that the stable homology group with constant coefficients $H^{\st}_{i}(\bK)$ is finitely generated as a $\bK$-vector space for each $i\geq 0$. For any covariant system $F\colon \cU\cM_{2} \to \bK\Mod\to \Ab$, we have a natural isomorphism of $\bK$-vector spaces for each $i\geq0$:
$$H_{\st}^{i}(F^{\vee})\cong\underset{k+l=i}{\bigoplus}H_{\st}^{k}(\bK)\underset{\bK}{\otimes} H^{l}(\cU\cM_{2};F).$$
In particular, for $\bK=\bQ$, the stable twisted cohomology module $H_{\st}^{*}(F^{\vee})$ is a free $\Sym_{\bQ}(\cE)$-module.
\end{thm}
\begin{proof}
Since $H_{\st}^{i}(F^{\vee})\cong (H^{\st}_{i}(F))^{\vee}$ by the first isomorphism of \eqref{eq:homology_covariant/cohomology_contravariant}, applying \cite[Th.~C]{soulie3} to $H^{\st}_{i}(F)$ provides that $H_{\st}^{i}(F^{\vee})\cong (\bigoplus_{k+l=i} H^{\st}_{k}(\bK)\underset{\bK}{\otimes} H_{l}(\cU\cM_{2};F))^{\vee}$. Recall that the duality functor preserves finite direct sums and that the $\bK$-vector space $H^{\st}_{i}(\bK)$ is finitely generated, so we deduce from \cite[Chapt.~V, Prop.~4.3]{MacLane_Homology} that $H_{\st}^{i}(F^{\vee})\cong \bigoplus_{k+l=i} (H^{\st}_{k}(\bK))^{\vee}\underset{\bK}{\otimes} (H_{l}(\cU\cM_{2};F))^{\vee}$. Hence the result follows from the second isomorphism of \eqref{eq:homology_covariant/cohomology_contravariant}.
\end{proof}

\begin{rmk}
The result of Theorem~\ref{thm:stable_functor_homology} does not depend on any polynomiality condition, and more generally on whether there is homological stability or not: the formula holds in general for the limit of the cohomology groups which always exists. In particular, this result recovers the previous analogous result due to Djament and Vespa \cite[Prop.~2.22, 2.26]{DV1} when the ambient monoidal structure is \emph{symmetric} monoidal.
\end{rmk}

\paragraph*{Twisted cohomology classes with coefficients in $H$.}

We now review the stable \emph{twisted} cohomology groups with coefficients in the first homology group of the surface.
For the reference \cite{KawazumiMorita}, we will rather quote the preprint version \cite{KawazumiMorita1} as it contains more content and details.

Based on the Harer stability \cite{Harer}, one can prove that $H^{k-1}(\Gamma^{1}_{g}; H(g)) \cong \bigoplus_{i\geq1}H^{k-2i}(\Gamma^{1}_{g}; \bZ)$ and $H^{k-1}(\Gamma_{g,1}; H(g)) \cong \bigoplus_{i\geq1}H^{k-2i}(\Gamma_{g,1}; \bZ)$ in the stable range. This was carried out by Harer \cite[Th.~7.1(b)]{HarerH_3} using $\bQ$ as ground ring for the first time. See \cite[Th.~1.B]{stablecohomologyKawazumi} in the integral case.
More precisely, in \cite{KawazumitwistedMMM}, the first author constructs cohomology classes in $H^{2l-1}(\Gamma^{1}_{g}; H(g))$ for each $l \geq 1$, such that these elements form a basis of the free $H^{*}(\Gamma^{1}_{g}; \bZ)$-module
$H^{*}(\Gamma^{1}_{g}; H(g))$.

The pullbacks on $\Gamma_{g,1}$ of these classes are denoted by $m_{i,1}\in H^{2i-1}(\Gamma_{g,1}; H(g))$ for $i \geq 1$ in the stable range, and provide an isomorphism:
\begin{equation}\label{eq:stablecohomologyKawazumi}
H_{\st}^{*}(H) \cong \bigoplus_{i\geq1}  H_{\st}^{*}(\bZ) m_{i,1}.
\end{equation}
In particular, the stable twisted cohomology $H_{\st}^{*}(H)$ is free as $H_{\st}^{*}(\bZ)$-module. Also, we note that $H_{\st}^{*}(H_{\bQ}(g)) \cong \bigoplus_{i\geq1} \Sym_{\bQ}(\cE) m_{i,1}$ with $\bQ$ as ground ring. See \cite[Th.~1.B]{stablecohomologyKawazumi} for further details. 

We now recollect the definition of the cohomology classes $m_{i,1}$ following their construction of \cite{KawazumiMorita1}, which encompasses the previous related works.
We fix $g\geq2$. Let $p\colon \Gamma^{1}_{g} \twoheadrightarrow \Gamma_{g}$ be the forgetful map of the puncture.
Let $\overline{\Gamma}^{1}_{g}$ be the pullback $\Gamma^{1}_{g}\times_{\Gamma_g} \Gamma^{1}_{g}$. More precisely, there is a defining fibre square
\[
\xymatrix{
\overline{\Gamma}^{1}_{g} \ar[r] \ar[d]_-{\pi}	&  \Gamma^{1}_{g} \ar[d]^-{\overline{p}}\\
\Gamma^{1}_{g} \ar[r]_{p}	 \ar@/_/_{\sigma}[u]						& \Gamma_g,
}
\]
where the section $\sigma: \Gamma^{1}_{g} \to \overline{\Gamma}^{1}_{g}$ is given by $\sigma(\phi) = (\phi, \phi)$. We deduce that there is an isomorphism $\overline{\Gamma}^{1}_{g} \cong   \pi_{1}(\Sigma_{g}) \rtimes\Gamma^{1}_{g}$ defined by $(\phi, \psi) \mapsto (\psi \phi^{-1}, \phi)$.
Under this isomorphism, $\sigma$ is given by $\sigma(\phi) = (1, \phi)$. Similarly to \cite[\S7]{MoritaJacobianII}, this semi-direct product decomposition gives rise to a cocycle $\tilde{k}_{0} \in Z^{1}(\overline{\Gamma}^{1}_{g}; H(g))$, defined by $\tilde{k}_{0}((x, \phi)) = [x]$ for all $x\in \pi_{1}(\Sigma_{g})$ and $\phi\in \Gamma^{1}_{g}$. We denote by $k_{0}$ the element of $H^{1}(\overline{\Gamma}^{1}_{g}; H(g))$ associated to the $1$-cocycle $\tilde{k}_{0}$. Furthermore, we denote by $e\in H^2(\Gamma^{1}_{g};\bZ)$ the Euler class of the short exact sequence \eqref{eq:Birman_SES_Cap} seen as a central extension, and by $\bar{e} \in H^2(\overline{\Gamma}^{1}_{g}; \bZ)$ its pullback induced by the projection map $\overline{\Gamma}^{1}_{g} \to \Gamma^{1}_{g}$, $(\phi, \psi) \mapsto \psi$.

We now consider the Lyndon-Hochschild-Serre spectral sequence associated to the short exact sequence defining $\overline{\Gamma}^{1}_{g} \cong   \pi_{1}(\Sigma_{g}) \rtimes\Gamma^{1}_{g}$. Since $H^{2}(\pi_{1}(\Sigma_{g});\bZ)=\bZ$ while $H^{i}(\pi_{1}(\Sigma_{g});\bZ)=0$ for $i>2$, the projection $\pi\colon \overline{\Gamma}^{1}_{g}\twoheadrightarrow \Gamma^{1}_{g}$ defines a morphism $\pi_{!}\colon H^{*}(\overline{\Gamma}^{1}_{g};H(g))\to H^{*-2}(\Gamma^{1}_{g};(g))$ known as the \emph{Gysin map}.
For all $i\geq1$, we consider the cohomology class in the stable range
\begin{equation}\label{eq:def:mij}
    \pi_{!}(\bar{e}^{i}\cup k_{0}) \in H^{2i-1}(\Gamma^{1}_{g}; H(g)).
\end{equation}
The class $m_{i,1}$ is defined as the preimage in $H_{\st}^{*}(H)$ of the pullback of \eqref{eq:def:mij} to the cohomology group  $H^{2i-1}(\Gamma_{g,1}; H(g))$ along $\mathrm{Cap}:\Gamma_{g,1}\twoheadrightarrow \Gamma^{1}_{g}$ in the stable range.
In particular, using Notation~\ref{not:0_1_trivial_crossed_hom}, Theorem~\ref{thm:Earle_k} and \eqref{eq:stablecohomologyKawazumi}, the class $m_{1,1}$ is the preimage in $H_{\st}^{*}(H)$ of the Earle class $k(g)\in H^{1}(\Gamma_{g,1}; H(g))$ for $g\geq 2$.
Moreover, we know from \cite[Ass.~4.8, (1)]{KawazumitwistedMMM} that the restriction of the class $\pi_{!}(\bar{e}\cup k_{0})$ along the composite $\pi_{1}(UT\Sigma_{g},x) \twoheadrightarrow \pi_{1}(\Sigma_{g},x) \hookrightarrow \Gamma^{1}_{g}$ is equal to $(2-2g)$ times the map $1_{H(g)}\circ \pi_{1}(\varpi) \colon \pi_1(UT\Sigma_{g}) \twoheadrightarrow \pi_1(\Sigma_{g}) \to H(g)$ in $H^{1}(UT\Sigma_{g}; H^{\vee}(g))$. Hence, we deduce from Theorem \ref{thm:Earle_k} that for any $g \geq 2$:
\begin{equation}\label{eq:identification_m_11}
m_{1,1} = k(g).
\end{equation}

\paragraph*{Contraction formula.}
Finally, we recall a classical operation on the twisted Mumford-Morita-Miller classes induced by the contraction of the twisted coefficients.
Let $\mu\colon H^{\vee}(g)\otimes H(g)\to\bZ$ be the intersection pairing associated to Poincar{\'e}-Lefschetz duality. The induced \emph{contraction map} for the cohomology groups is generically denoted by $\mu_{*}$.

\begin{prop}[{\cite[Th.~6.2]{KawazumiMorita1}}]\label{prop:Contraction_Formula}
For all $l,l'\geq1$, for all classes $m_{l,1}\in H_{\st}^{2l-1}(H)$ and $m_{l',1}\in H_{\st}^{2l'-1}(H)$, we have
\begin{equation}\label{eq:contractionformula}
\mu_{*}(m_{l,1},m_{l',1}) = -e_{l+l'-1} \in H_{\st}^{2(l+l'-1)}(\bZ).
\end{equation}
\end{prop}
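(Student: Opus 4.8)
The plan is to realise the classes $m_{l,1}$ geometrically as fibre integrals over the universal surface bundle and to compute the contraction by a double fibre-integration argument, collapsing the double integral along the diagonal. Write $\pi\colon E\to B$ for the universal $\Sigma_{g}$-bundle classified by $B=B\Gamma_{g}^{1}$, so that $E=B\overline{\Gamma}_{g}^{1}$, equipped with the vertical Euler class $\bar{e}=e(T^{v})\in H^{2}(E;\bZ)$, the tautological class $\bar{\omega}\in H^{1}(E;H(g))$, and the section $\sigma\colon B\to E$ given by the marked point. By construction $m_{l,1}=\pi_{!}(\bar{e}^{l}\bar{\omega})$ and $e_{i}=\pi_{!}(\bar{e}^{i+1})$, the normalisation $\sigma^{*}\bar{\omega}=0$ holds since $\bar{\omega}((1,\phi))=0$, and $\sigma^{*}\bar{e}=e$ is the Euler class of the central extension \eqref{eq:Birman_SES_2}.

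First I would pass to the fibre product $E\times_{B}E$, with projections $q_{1},q_{2}$ and structure map $\tilde{\pi}=\pi\circ q_{1}=\pi\circ q_{2}$, whose fibre is $\Sigma_{g}\times\Sigma_{g}$. Combining the base-change/push--pull identity $\pi_{!}(\alpha)\cup\pi_{!}(\beta)=\tilde{\pi}_{!}(q_{1}^{*}\alpha\cup q_{2}^{*}\beta)$ with the naturality of fibre integration in the coefficients (so that $\mu_{*}$ commutes with $\tilde{\pi}_{!}$), this turns the product of two fibre integrals into
\[
\mu_{*}(m_{l,1}\cup m_{l',1})=\tilde{\pi}_{!}\bigl((q_{1}^{*}\bar{e})^{l}(q_{2}^{*}\bar{e})^{l'}\,\mu_{*}(q_{1}^{*}\bar{\omega}\cup q_{2}^{*}\bar{\omega})\bigr).
\]

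The heart of the argument is to identify the contracted tautological square with the relative diagonal class. Restricting to a fibre $\Sigma_{g}\times\Sigma_{g}$, the class $\bar{\omega}$ becomes the identity in $H^{1}(\Sigma_{g};H_{1}(\Sigma_{g}))=\mathrm{End}(H_{1}(\Sigma_{g}))$, and a Künneth computation using the antisymmetry of $\mu$ shows that $\mu_{*}(q_{1}^{*}\bar{\omega}\cup q_{2}^{*}\bar{\omega})$ restricts to minus the $(1,1)$-component of the diagonal class. As the remaining Künneth components $\omega\times1$ and $1\times\omega$ lift to $q_{1}^{*}u$ and $q_{2}^{*}u$ for $u=[\sigma]\in H^{2}(E)$, the difference with $-[\Delta]$ restricts trivially to every fibre and is therefore pulled back from $B$; thus globally
\[
\mu_{*}(q_{1}^{*}\bar{\omega}\cup q_{2}^{*}\bar{\omega})=-[\Delta]+q_{1}^{*}u+q_{2}^{*}u+\tilde{\pi}^{*}z
\]
for some $z\in H^{2}(B;\bQ)$. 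Restricting this identity along $\Delta$ and using $\Delta^{*}[\Delta]=\bar{e}$, $q_{i}\circ\Delta=\mathrm{id}$ together with $\sigma^{*}\bar{\omega}=0$ and $\sigma^{*}u=e$ then pins down $z=-e$.

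It remains to integrate term by term. The diagonal term gives the main contribution: by $\tilde{\pi}_{!}(\eta\cup[\Delta])=\pi_{!}(\Delta^{*}\eta)$ and $q_{i}\circ\Delta=\mathrm{id}$,
\[
\tilde{\pi}_{!}\bigl((q_{1}^{*}\bar{e})^{l}(q_{2}^{*}\bar{e})^{l'}[\Delta]\bigr)=\pi_{!}(\bar{e}^{l+l'})=e_{l+l'-1}.
\]
The correction terms, evaluated with the projection formula, base change $(q_{1})_{!}(q_{2}^{*}\bar{e}^{l'})=\pi^{*}e_{l'-1}$, and $\pi_{!}(\bar{e}^{l}u)=\sigma^{*}(\bar{e}^{l})=e^{l}$, all carry a positive power of the Euler class $e$ (they equal $e^{l}e_{l'-1}$, $e^{l'}e_{l-1}$ and $-e\,e_{l-1}e_{l'-1}$). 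Since the formula is asserted in $H^{*}(\Gamma_{g,1};\bQ)$ and $\mathrm{Cap}^{*}e$ vanishes (the extension \eqref{eq:Birman_SES_2} splits after pulling back along $\mathrm{Cap}$), while $\mathrm{Cap}^{*}e_{i}=e_{i}$, every correction dies on $\Gamma_{g,1}$ and the two displays combine to $\mu_{*}(m_{l,1}\cup m_{l',1})=-e_{l+l'-1}$. The main obstacle is the global diagonal identity of the third paragraph: the fibrewise Künneth computation only yields its shape, and upgrading it to an honest identity on $E\times_{B}E$ — in particular determining $z$ and fixing the overall sign, which is the entire content of the ``$-$'' — requires the normalisation $\sigma^{*}\bar{\omega}=0$ and careful Koszul-sign bookkeeping.
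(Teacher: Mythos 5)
Your route is genuinely different from the paper's: the paper never leaves group cohomology, quoting the explicit multiplicative decomposition $H^{*}(\overline{\Gamma}_{g}^{1};M)\cong H^{*}(\Gamma_{g}^{1};M)\oplus H^{*-1}(\Gamma_{g}^{1};H(g)\otimes M)\oplus H^{*-2}(\Gamma_{g}^{1};M)$ of \cite[Theorem~5.3]{KawazumiMorita1} and deriving from it the four-term identity \eqref{eq:contractionformula_general}, whereas you work on the fibre product $E\times_{B}E$ and encode the same information in a fibrewise-diagonal identity. Your bookkeeping is consistent with the paper's: the four terms you obtain, $-e_{l+l'-1}+e^{l}e_{l'-1}+e^{l'}e_{l-1}-e\,e_{l-1}e_{l'-1}$, are exactly those of \eqref{eq:contractionformula_general}, and your last step (vanishing of $\mathrm{Cap}^{*}e$ because the pullback of the central extension \eqref{eq:Birman_SES_2} along its own quotient map splits) is also the paper's.

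However, there is a genuine gap at the crux, and it is not the Koszul-sign bookkeeping you flag at the end. The inference ``restricts trivially to every fibre and is therefore pulled back from $B$'' is false for degree-$2$ classes: fibrewise vanishing only places the difference class in Serre filtration $\geq 1$, and the obstruction to its being pulled back from $B$ lies in $E_{\infty}^{1,1}\subset H^{1}\bigl(\Gamma_{g}^{1};H^{1}(\Sigma_{g}\times\Sigma_{g};\bQ)\bigr)\cong H^{1}(\Gamma_{g}^{1};H_{\bQ}(g))^{\oplus 2}$, which is \emph{two-dimensional}, spanned by the Earle class in each factor (\cite[Proposition~6.4]{MoritaJacobianI}, recalled in \S\ref{s:Earle}). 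This is not a pedantic point: $q_{1}^{*}\bigl(\bar{e}-(2-2g)u\bigr)$ is a class on $E\times_{B}E$ that vanishes on every fibre but is not pulled back from $B$, and if the difference contained it, your term-by-term integration would change by $\bigl(e_{l}-(2-2g)e^{l}\bigr)e_{l'-1}$, whose $e_{l}e_{l'-1}$ part does \emph{not} die on $\Gamma_{g,1}$; so an undetected filtration-$1$ term would genuinely alter the final formula, not merely contribute terms killed by $e=0$. Moreover, your restriction along $\Delta$ cannot detect such terms, so it does not pin down the identity either. The gap is repairable inside your framework: set $C:=\mu_{*}(q_{1}^{*}\overline{\omega}\cup q_{2}^{*}\overline{\omega})+[\Delta]-q_{1}^{*}u-q_{2}^{*}u+\tilde{\pi}^{*}e$ and restrict along the two partial sections $j_{1}=\sigma\times_{B}\mathrm{id}$ and $j_{2}=\mathrm{id}\times_{B}\sigma$; using $\sigma^{*}\overline{\omega}=0$, $\sigma^{*}u=e$ and $j_{i}^{*}[\Delta]=u$, one gets $j_{1}^{*}C=j_{2}^{*}C=0$. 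Since $j_{i}$ restricts on fibres to the inclusion of $\Sigma_{g}$ into one factor of $\Sigma_{g}\times\Sigma_{g}$, naturality of the Serre spectral sequence kills both $E_{\infty}^{1,1}$-components of $C$, so $C=\tilde{\pi}^{*}w$; then $0=j_{2}^{*}C=\pi^{*}w$ forces $w=0$ because $\pi^{*}$ is split injective via $\sigma$. Alternatively, one can simply quote the multiplicative decomposition of \cite[Theorem~5.3]{KawazumiMorita1}, which is precisely how the paper excludes these terms.
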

\begin{proof}[Sketch of proof]
We consider the short exact sequence
\begin{equation}\label{eq:Birman_SES_{2}}
1 \longrightarrow \pi_{1}(\Sigma_{g},x) \longrightarrow \overline{\Gamma}^{1}_{g} \overset{\pi}\longrightarrow \Gamma^{1}_{g} \longrightarrow 1
\end{equation}
induced by pulling back the short exact sequence $1 \longrightarrow \pi_{1}(\Sigma_{g},x) \longrightarrow \Gamma_{g}^{1} \overset{\overline{p}}\longrightarrow \Gamma_{g} \longrightarrow 1$ (i.e.~a copy of \eqref{eq:Birman_SES_1}) along the above fibre square.
As is proved in \cite[Th.~5.3]{KawazumiMorita1}, the Lyndon-Hochschild-Serre spectral sequence for the group extension \eqref{eq:Birman_SES_{2}} induces a canonical decomposition
$$H^{*}(\overline{\Gamma}^{1}_{g}; M) \cong  H^{*}(\Gamma^{1}_{g}; M) \oplus H^{*-1}(\Gamma^{1}_{g}; H(g)\otimes M) \oplus H^{*-2}(\Gamma^{1}_{g}; M)$$
for any $\Gamma^{1}_{g}$-module $M$. 
It is multiplicative and described explicitly by using the cohomology classes $k_{0}$ and $\overline{e}$. The following formula is then deduced from 
a direct computation based on the decomposition: for $M$ and $M'$ two $\overline{\Gamma}^{1}_{g}$-modules, for $m\in H^{*}(\overline{\Gamma}^{1}_{g};M)$ and $m'\in H^{*}(\overline{\Gamma}^{1}_{g};M')$, we have
\begin{equation}\label{eq:contractionformula_general}
(\id_{M}\otimes\mu\otimes \id_{M'})_{*}(\pi_{!}(m\otimes k_{0})\cup\pi_{!}(k_{0}\otimes m')) 
 =-\pi_{!}(m\otimes m') + \sigma^{*}(m)\pi_{!}(m') + \pi_{!}(m)\sigma^{*}(m') - e\pi_{!}(m)\pi_{!}(m').
\end{equation}
Here, we recall that $\sigma: \Gamma^{1}_{g} \to \overline{\Gamma}^{1}_g$, $\phi \mapsto (\phi, \phi)$, is the diagonal map. 
In particular, we have $\mu_{*}(m_{l,1},m_{l',1}) = -e_{l+l'-1} + e^{l}e_{l'-1} + e^{l'}e_{l-1} - ee_{l-1}e_{l'-1}$ in the stable range. Since the Euler class $e$ vanishes on $\Gamma_{g,1}$, we deduce formula \eqref{eq:contractionformula} from \eqref{eq:contractionformula_general}.
\end{proof}

\begin{rmk}
Analogous formulas to \eqref{eq:contractionformula_general} are computed in \cite[Prop.~3.10]{KupersRandal-Williams} for mapping class groups of surfaces and higher even-dimensional manifolds. There is no $-1$ sign for the analogue to formula \eqref{eq:contractionformula} in \cite[\S3]{KupersRandal-Williams} because of a difference of conventions with \cite{KawazumiMorita1} in the identification of $H^{1}(\Sigma_{g,1}; \bZ)$ with $H(g)$.
\end{rmk}

\subsubsection{Exotic situation}\label{s:exotic}
On the basis of current knowledge, contrary to the covariant cases, there is no general framework for cohomological stability with twisted coefficients given by \emph{covariant} systems. In particular, there is no result analogous to Proposition~\ref{prop:hom_stab_MCG} replacing $F^{\vee}$ in that statement by the functor $\tilde{H}\colon\cU\cM_{2} \to \Ab$ defined by the groups $\{H^{1}(UT\Sigma_{g,1};\bZ),g\in\bN\}$.
However, the functor $\tilde{H}$ does satisfy homological stability as follows.

\begin{prop}\label{prop:homological_stability_covariant}
For each $g\geq0$, the image by the functor $\tilde{H}$ of the morphism of $\cU\cM_{2}$ of type $[\Sigma_{1,1},\id_{\Sigma_{g+1,1}}]$ has a canonical $\Gamma_{g,1}$-equivariant splitting denoted by $\tilde{H}^{-1}([\Sigma_{1,1},\id_{\Sigma_{g+1,1}}])$.

Then, for each $i\geq0$, let $\Psi_{i,g}$ denote the canonical map $ H^{i}(\Gamma_{g+1,1}; \tilde{H}(g+1)) \to H^{i}(\Gamma_{g,1}; \tilde{H}(g))$ induced by the canonical canonical injection $\mathfrak{i}_{g} \colon \Gamma_{g,1}\hookrightarrow\Gamma_{g+1,1}$ and the splitting $\tilde{H}^{-1}([\Sigma_{1,1},\id_{\Sigma_{g+1,1}}])$. If $g\geq2i+5$, then $\Psi_{i,g}$ is an isomorphism. 
\end{prop}
\begin{proof}
First of all, since $H$ is a strong monoidal functor $(\cU\cM_{2},\natural,\bD^{2}) \to (\Ab,\oplus,0)$ (see Example~\ref{eg:Z_H_poly_functor}), the morphism $H([\Sigma_{1,1},\id_{\Sigma_{g+1,1}}])$ has a canonical $\Gamma_{g,1}$-equivariant splitting, that we denote by $H^{-1}([\Sigma_{1,1},\id_{\Sigma_{g+1,1}}])$. Now, we fix the  framings $\xi$ and $\xi'$ on $\Sigma_{g,1}$ and $\Sigma_{1,1}$, and we view $\tilde{H}(g+1)$ as a $\Gamma_{g+1,1}$-representation via the canonical injection $\mathfrak{i}_{g} \colon \Gamma_{g,1}\hookrightarrow\Gamma_{g+1,1}$. Then, it follows from the representation structure of $\tilde{H}(g+1)$ described by \eqref{eq:matrix_action}, from the fact that $\Gamma_{g,1}$ acts trivially on the subsurface $\Sigma_{1,1}\hookrightarrow \Sigma_{1,1}\natural \Sigma_{g,1} \simeq \Sigma_{g+1,1}$, from Lemma~\ref{lem:glued_crossed_hom} and from the fact that $H$ is a strong monoidal functor that there is a $\Gamma_{g,1}$-equivariant decomposition $\tilde{H}(g+1)\cong H(1) \oplus \tilde{H}(g)$. Therefore, the induced projection $\tilde{H}(g+1) \twoheadrightarrow \tilde{H}(g)$ is the canonical $\Gamma_{g,1}$-equivariant splitting $\tilde{H}^{-1}([\Sigma_{1,1},\id_{\Sigma_{g+1,1}}])$ of $\tilde{H}([\Sigma_{1,1},\id_{\Sigma_{g+1,1}}])$, and we construct from \eqref{SEShutb} the following commutative diagram where the rows are short exact sequences:
\begin{equation}\label{eq:proof_stability_tilde_H}
\centering
\begin{split}
\xymatrix{0 \ar@{->}[r]
& \bZ \ar@{->}[r]  \ar@{=}[d]
& \tilde{H}(g+1)\ar@{->}[d]^-{\tilde{H}^{-1}([\Sigma_{1,1},\id_{\Sigma_{g+1,1}}])} \ar@{->}[rr]
&& H(g+1) \ar@{->}[d]^-{H^{-1}([\Sigma_{1,1},\id_{\Sigma_{g+1,1}}])} \ar@{->}[r]
& 0\\
0 \ar@{->}[r]
& \bZ\ar@{->}[r]
& \tilde{H}(g)\ar@{->}[rr]
&& H(g) \ar@{->}[r]
& 0.}
\end{split}
\end{equation}
Considering the long exact sequences in cohomology associated to the two rows of \eqref{eq:proof_stability_tilde_H}, we obtain the following commutative diagram for each $g\geq0$, where the rows are exact sequences and the vertical arrows $\Phi_{i,g}(\bZ)$ and $\Phi_{i,g}(H)$ are both isomorphisms when $g\geq 2i+5$:
$$\xymatrix{\cdots\ar@{->}[r] & 
H^{i}(\Gamma_{g+1,1};\bZ) \ar@{->}[r]  \ar@{->}[d]^-{\Phi_{i,g}(\bZ)}_{\cong}
& H^{i}(\Gamma_{g+1,1};\tilde{H}^{\vee}(g+1))\ar@{->}[d]^-{\Psi_{i,g}} \ar@{->}[r]
& H^{i}(\Gamma_{g+1,1};H(g+1)) \ar@{->}[d]^-{\Phi_{i,g}(H)}_{\cong} \ar@{->}[r]
& \cdots \\
\cdots\ar@{->}[r]
& H^{i}(\Gamma_{g,1};\bZ)\ar@{->}[r]
& H^{i}(\Gamma_{g,1};\tilde{H}^{\vee}(g))\ar@{->}[r]
& H^{i}(\Gamma_{g,1};H(g)) \ar[r]
& \cdots}$$
The results thus follows from a clear recursion on $i$ and the five lemma.
\end{proof}
\begin{defn}\label{def:stable_cohomology_covariant}
For each $i\geq0$, the \emph{stable twisted cohomology group} $H_{\st}^{i}(\tilde{H})$ is the inverse limit $\underleftarrow{\lim}_{g\geq0}H^{i}(\Gamma_{g,1}; \tilde{H}(g))$ induced by the maps $\{\Psi_{i,g}\}_{g\in\bN}$. Moreover, there is a canonical $H_{\st}^{*}(\bZ)$-module structure on the cohomology groups $H_{\st}^{*}(\tilde{H}):=\bigoplus_{i\geq0}H_{\st}^{i}(\tilde{H})$, induced by the cup product and the compatibility of the stabilisation maps $\{\Phi'_{i_{1},g}(\bZ),\Psi_{i_{2},g}\}_{i_{1}, i_{2}, g\in\bN}$ with respect to that operation.
\end{defn}

\section{Stable twisted cohomology computations}\label{sec:first_homology_group_systems}

In this section, we prove Theorems~\ref{thm:main_thm_0} and \ref{thm:main_thm_1}; see Theorems~\ref{thm:stable_cohomology_contravariant_tildeH} and \ref{thm:explicit-generators-stable-cohomology-tildeH}. In both cases, the work relies on the determination of the connecting morphisms of the long exact sequences associated to the short exact sequences \eqref{SESchutb} and \eqref{SEShutb} respectively.
As a preliminary, we recall the following key result, which expresses the connecting homomorphism of a cohomology long exact sequence as the composition product (also known as the Yoneda product) with an extension class; we refer to \cite[\S7]{BourbakiAlgHomol} for further details about this notion.
\begin{prop}[{\cite[\S7, Prop.~5.a)]{BourbakiAlgHomol}}]\label{prop:connecting_hom_cup_product}
Let $G$ be a group and $R$ be a principal ideal domain. We consider a short exact sequence $(S):0\to K \to M \to C \to 0$ of left $R[G]$-module, classified by $\kappa\in \Ext^{1}_{R[G]}(C,K)$.
For each $i\geq0$, the connecting homomorphism $\delta^{i}\colon H^{i}(G;C)\to H^{i+1}(G; K)$ of the cohomology long exact sequence associated with $(S)$ is equal to the composition product $c\mapsto \kappa\circ c$ by $\kappa$, denoted by $\kappa\circ -$.
\end{prop}
Furthermore, we fix the following conventions for notations for all the remainder of \S\ref{sec:first_homology_group_systems}.
\begin{convention}\label{conv:section_4}
From now on, we implicitly assume that $g\geq2i+5$ each time we consider a cohomological degree $i$ for $H^{i}(\Gamma_{g,1};M(g))$ where $M(g)=\tilde{H}(g)$ or $\tilde{H}^{\vee}(g)$, for the homological stability bound of Proposition~\ref{prop:hom_stab_MCG} to be reached.
Also, we denote by $H_{\st}^{\odd}(M)$ and $H_{\st}^{\even}(M)$ the $\bN$-graded submodules of $H_{\st}^{*}(M)$ defined by $\{H_{\st}^{\mathrm{2i+1}}(M),i\in\bN\}$ and $\{H_{\st}^{\mathrm{2i}}(M),i\in\bN\}$ respectively.
\end{convention}

\subsection{First cohomology group system}\label{subsec:first_cohomology_group_system}

We start by studying the stable cohomology groups of the mapping class groups $\Gamma_{g,1}$ with twisted coefficient given by $\tilde{H}^{\vee}(g)$.
We recall from \S\ref{sec:unit_tangent_bundle_representations} that $k(g)$ is the extension class of the short exact sequence \eqref{SESchutb} in the stable range. We note that the composition product $k(g)\circ - \colon H_{\st}^{i}(\bZ)\to H_{\st}^{i+1}(H)$ coincides with the cup product $k(g)\cup -$ by $k(g)$; see \cite[Chapter~5, Th.~4.6]{Brown} for instance. Since $k(g)=m_{1,1}$ for $g\geq2$ (see \eqref{eq:identification_m_11}), it follows from Proposition~\ref{prop:connecting_hom_cup_product} that the cohomology long exact sequence applied to the rational version of \eqref{SESchutb} may be written as follows in the stable range (i.e.~$g\geq2i+5$):
\begin{equation}\label{eq:LES_contravariant_tildeH}
\xymatrix{\cdots \ar@{->}[r] & H_{\st}^{i}(\tilde{H}^{\vee})\ar@{->}[r]^-{H_{\st}^{i}(\iota^{*})} & H_{\st}^{i}(\bZ)\ar@{->}[rr]^-{m_{1,1}\cup -} && H_{\st}^{i+1}(H)\ar@{->}[rr]^-{H_{\st}^{i+1}(\varpi^{*})} && H_{\st}^{i+1}(\tilde{H}^{\vee}) \ar@{->}[r]  & \cdots.}
\end{equation}
For each $i\geq0$, we denote the class $H_{\st}^{2i-1}(\varpi^{*})(m_{i,1})$ in $H_{\st}^{2i-1}(\tilde{H}^{\vee})$ by $\tilde{m}_{i,1}$. We deduce that:
\begin{thm}\label{thm:stable_cohomology_contravariant_tildeH}
There is a $H_{\st}^{*}(\bZ)$-module isomorphism $H_{\st}^{*}(\tilde{H}^{\vee})\cong\bigoplus_{i\geq2} H_{\st}^{*}(\bZ)\tilde{m}_{i,1}$.
\end{thm}
\begin{proof}
We recall from \eqref{eq:stablecohomologyKawazumi} that the stable cohomology module $H_{\st}^{*}(H)$ is isomorphic to the free $H_{\st}^{*}(\bZ)$-module with basis $\{m_{i,1},i\geq1\}$. Hence, the map $m_{1,1}\cup -$ being defined by $m_{1,1}\cup e_{\alpha}=e_{\alpha} m_{1,1}$ for all $\alpha\geq1$, it induces via the long exact sequence \eqref{eq:LES_contravariant_tildeH} an injective $H_{\st}^{*}(\bZ)$-module morphism $m_{1,1}\cup -\colon H_{\st}^{*}(\bZ)\hookrightarrow \bigoplus_{i\geq1} H_{\st}^{*}(\bZ)m_{i,1}$ which cokernel is isomorphic to $H_{\st}^{*}(\tilde{H}^{\vee})$. The result thus follows from the obvious direct computation of this cokernel.
\end{proof}

Over the rationals, since $H_{\st}^{\odd}(\bQ) = 0$ by \eqref{eq:Madsen_weiss_thm}, we deduce from Theorem~\ref{thm:stable_cohomology_contravariant_tildeH} that:
\begin{coro}\label{coro:stable_cohomology_contravariant_tildeH}
The $\Sym_{\bQ}(\cE)$-module $H_{\st}^{\odd}(\tilde{H}^{\vee}_{\bQ})$ is isomorphic to $\bigoplus_{i\geq2} \Sym_{\bQ}(\cE)\tilde{m}_{i,1}$ while the $\Sym_{\bQ}(\cE)$-module $H_{\st}^{\even}(\tilde{H}^{\vee}_{\bQ})$ is null.
\end{coro}

\begin{rmk}[\emph{Interpretation in terms of functor homology.}]\label{rmk:functor_homology_interpretation}
The modules $\tilde{H}^{\vee}_{\bQ}$ define a \emph{contravariant} twisted coefficient system; see Proposition~\ref{prop:tilde_H_functor}. It follows from Theorem~\ref{thm:stable_functor_homology} that $H_{\st}^{*}(\tilde{H}^{\vee}_{\bQ})\cong \Sym_{\bQ}(\cE)\otimes_{\bQ}H^{*}(\cU\cM_{2};\tilde{H}_{\bQ})$.
In particular, this explains why the stable cohomology is a free $\Sym_{\bQ}(\cE)$-module.
Then the long exact sequence for the homology of categories (see \cite[\S2]{FranjouPira} for instance) associated with the short exact sequence \eqref{SESchutb} directly gives that $H^{i}(\cU\cM_{2};\tilde{H}_{\bQ})\cong H^{i}(\cU\cM_{2};H_{\bQ})$ if $i\geq2$.
However, we need the above reasoning to compute that $H^{0}(\cU\cM_{2};\tilde{H}_{\bQ})= H^{1}(\cU\cM_{2};\tilde{H}_{\bQ}) =0$.
\end{rmk}

\subsection{First homology group system}\label{subsec:first_homology_group_system}

Contrasting with \S\ref{subsec:first_cohomology_group_system}, we study here the stable cohomology groups of the mapping class groups with twisted coefficient in the first homology group of the unit tangent bundle of the surface.

\subsubsection{Determination of the connecting homomorphism}\label{sss:determination_connecting_hom}

For each $i\geq0$, let $\delta_{\eqref{SEShutb}}^{i}\colon H^{i}(\Gamma_{g,1};H(g))\to H^{i+1}(\Gamma_{g,1}; \bZ)$ be the $i$th connecting homomorphism of the cohomology long exact sequence associated with the extension \eqref{SEShutb}. Our aim in this section is to determine $\delta_{\eqref{SEShutb}}^{i}$ in terms of simpler operations, that we may handle for stable twisted cohomology computations in \S\ref{sss:computation_stable_twisted}; see Proposition~\ref{prop:connecting_hom_determination_cov}.
Firstly, since $H(g)$ is a torsion-free abelian group, tensoring (on the left) the short exact sequence \eqref{SESchutb} with $H(g)$ provides an extension:
\begin{equation}\label{eq:tensor_SESchutb}
\xymatrix{0\ar@{->}[r] &  H^{\vee}(g) \otimes H(g)\ar@{->}[rr]^-{\varpi^{*}\otimes \id_{H(g)}} && \tilde{H}^{\vee}(g)\otimes H(g)\ar@{->}[rr]^-{\iota^{*}\otimes \id_{H(g)}} && H(g)\ar@{->}[r] & 0.}
\end{equation}
Let $\delta_{\eqref{eq:tensor_SESchutb}}^{i}$ denote the $i$th connecting homomorphism of the cohomology long exact sequence associated with \eqref{eq:tensor_SESchutb}.
\begin{lem}\label{lem:connecting_hom_tensor}
For each $g\geq0$, the connecting homomorphism $\delta_{\eqref{eq:tensor_SESchutb}}^{i}$ is equal to $k(g)\cup -$.
\end{lem}
\begin{proof}
Let $\delta_{\eqref{SESchutb}}^{i}$ denote the $i$th connecting homomorphism of the cohomology long exact sequence associated \eqref{SESchutb}. We recall from \S\ref{subsec:first_cohomology_group_system} that $\delta_{\eqref{SESchutb}}^{i}=k(g)\cup -$ in the stable range, and so $\delta_{\eqref{SESchutb}}^{1}(1)=k(g)$ for $1\in H^{0}(\Gamma_{g,1};\bZ)$. Then, it follows from \cite[Chapter~5, (3.3)]{Brown} that $\delta_{\eqref{eq:tensor_SESchutb}}^{i}(v)=\delta_{\eqref{eq:tensor_SESchutb}}^{i}(1\cup v)= \delta_{\eqref{SESchutb}}^{1}(1)\cup v = k(g)\cup v$ for all $v\in H^{i}(\Gamma_{g,1};H(g))$.
\end{proof}
Now, we recall from \S\ref{ss:covariant_systems_TSH} that $\mu\colon H^{\vee}(g)\otimes H(g)\to\bZ$ denotes the intersection pairing associated to Poincar{\'e}-Lefschetz duality; in particular, it defines a class $\mu\in \Ext^{0}_{\bZ[\Gamma_{g,1}]}(H^{\vee}(g)\otimes H(g),\bZ)$.
Following Proposition~\ref{prop:Contraction_Formula}, we denote by $\mu_{i}(k(g),-)$ the map $H^{i}(\Gamma_{g,1};H(g))\to H^{i+1}(\Gamma_{g,1};\bZ)$ defined by $v \mapsto \mu_{*}(k(g), v)$ induced by the contraction map $\mu_{*}$.
\begin{prop}\label{prop:connecting_hom_determination_cov}
For $g\geq0$, the connecting homomorphism $\delta_{\eqref{SEShutb}}^{i}$ is equal to $\mu_{i}(k(g),-)$.
\end{prop}
\begin{proof}
Let $\Upsilon\colon \tilde{H}^{\vee}(g)\otimes H(g) \to \tilde{H}(g)$ be the abelian group morphism defined by $(f\otimes x) \mapsto (f(0,x),x)$. Also, we have $\Upsilon(\phi\cdot (f, x))= (f(\phi^{-1}(0,\phi\cdot x)),\phi\cdot x) = (f(0,x),\phi\cdot x) =\phi\cdot \Upsilon(f, x)$ for all $\phi\in\Gamma_{g,1}$, so the map $\Upsilon$ is $\Gamma_{g,1}$-equivariant. Now, we consider the following diagram in the category of $\bZ[\Gamma_{g,1}]$-modules, where the top row is \eqref{eq:tensor_SESchutb} and the bottom row is \eqref{SEShutb}:
\begin{equation}\label{eq:proof_connectiong_hom_tilde_H}
\centering
\begin{split}
\xymatrix{0\ar@{->}[r] & 
H^{\vee}(g) \otimes H(g) \ar@{->}[rr]^-{\varpi^{*}\otimes \id_{H(g)}}  \ar@{->}[d]^-{\mu}
&& \tilde{H}^{\vee}(g) \otimes H(g)\ar@{->}[d]^-{\Upsilon} \ar@{->}[rr]^-{\iota^{*}\otimes \id_{H(g)}}
&& H(g) \ar@{=}[d]^-{\id_{H(g)}} \ar@{->}[r]
& 0 \\
0\ar@{->}[r]
& \bZ\ar@{->}[rr]^-{\iota_{*}}
&& \tilde{H}(g)\ar@{->}[rr]^-{\varpi_{*}}
&& H(g) \ar[r]
& 0.}
\end{split}
\end{equation}
For all $f \in \tilde{H}^{\vee}(g)$ and $x\in H(g)$, we have $(\varpi_{*}\circ \Upsilon)(f\otimes x)= x =(\iota^{*}\otimes \id_{H(g)})(f \otimes x)$, which proves that the right-hand square of \eqref{eq:proof_connectiong_hom_tilde_H} is commutative.
Furthermore, the isomorphism \eqref{eq:dual_H} induced by the Poincar{\'e}-Lefschetz duality is explicitly given by intersection pairing via the map $H(g)\overset{\sim}{\rightarrow} H^{\vee}(g)$ defined by $y\mapsto \mu(y,-)$. We compute that $(\Upsilon\circ(\varpi^{*}\otimes \id_{H(g)}))(\mu(y,-)\otimes x) = (\mu(y,x), 0) + (0,x)$ for all $x,y\in H(g)$.
For $(0,x)\in \Image(\Upsilon\circ(\varpi^{*}\otimes \id_{H(g)}))$, note that $\varpi^{*}(0,x)=0$ since the right-hand square of \eqref{eq:proof_connectiong_hom_tilde_H} is commutative, and so there exists $z\in\bZ$ such that $\iota_{*}(z)=(z,0)=(0,x)$ because the bottom row is exact. Hence the element $(0,x)$ is null in the image of $\Upsilon\circ(\varpi^{*}\otimes \id_{H(g)})$. We deduce that $(\Upsilon\circ(\varpi^{*}\otimes \id_{H(g)}))(\mu(y,-)\otimes x)=(\iota_{*}\circ \mu)(y\otimes x)$, so the left-hand square of \eqref{eq:proof_connectiong_hom_tilde_H} is commutative, which proves that the full diagram \eqref{eq:proof_connectiong_hom_tilde_H} is commutative.

We recall from \S\ref{sec:unit_tangent_bundle_representations} that $k^{\vee}(g)$ is the extension class of \eqref{SEShutb}. Also, the extension class of \eqref{eq:tensor_SESchutb} in $\Ext^{1}_{\bZ[\Gamma_{g,1}]}(H^{\vee}(g)\otimes H(g),H(g))$ is the tensor product $k(g)\otimes \id_{H(g)}$ of the trivial class $\id_{H(g)}\in \Ext^{0}_{\bZ[\Gamma_{g,1}]}(H(g);H(g))$ with the class $k(g)\in\Ext^{1}_{\bZ[\Gamma_{g,1}]}(\bZ;H(g))$ of the extension \eqref{SESchutb}.  Therefore, it follows from \cite[\S7, Prop.~4]{BourbakiAlgHomol} and from the commutativity of \eqref{eq:proof_connectiong_hom_tilde_H} that the class $k^{\vee}(g)$ is equal to the composition product $\mu \circ (k(g)\otimes \id_{H(g)})$ in $\Ext^{1}_{\bZ[\Gamma_{g,1}]}(H(g),\bZ)$.
Then, we deduce from Proposition~\ref{prop:connecting_hom_cup_product} and from the associativity of the composition product (see \cite[\S7.1]{BourbakiAlgHomol}) that $\delta_{\eqref{SEShutb}}^{i}=\mu \circ ((k(g)\otimes \id_{H(g)})\circ-)=\mu \circ \delta_{\eqref{eq:tensor_SESchutb}}^{i}$. Since the contraction map $\mu_{*}$ is by definition the composition product with $\mu$, the result follows from Lemma~\ref{lem:connecting_hom_tensor}.
\end{proof}

\subsubsection{Computation of the stable twisted cohomology}\label{sss:computation_stable_twisted}
In the stable range (i.e.~$g\geq2i+5$), since $k(g)=m_{1,1}$ for $g\geq2$ (see \eqref{eq:identification_m_11}), it follows from Proposition~\ref{prop:connecting_hom_determination_cov} that the cohomology long exact sequence applied to \eqref{SESchutb} may be written as follows:
\begin{equation}\label{eq:LES_tilde_H}
\xymatrix{\cdots \ar@{->}[r] & H_{\st}^{2i+1}(\tilde{H})\ar@{->}[r] & H_{\st}^{2i+1}(H)\ar@{->}[rr]^-{\mu_{i}(m_{1,1},-)} &  & H_{\st}^{2i+2}(\bZ)\ar@{->}[r] & H_{\st}^{2i+2}(\tilde{H})\ar@{->}[r] & \cdots}
\end{equation}
By the contraction formula \eqref{eq:contractionformula}, we compute that $\mu_{i}(m_{1,1}, m_{\alpha,1})=-e_{\alpha}$ for all $\alpha\geq1$.
Recall from \eqref{eq:stablecohomologyKawazumi} that $H_{\st}^{*}(H) \cong \bigoplus_{i\geq1}  H_{\st}^{*}(\bZ) [2i-1]$. Let $\Xi\colon H_{\st}^{\odd}(\bZ) \to H_{\st}^{*}(\bZ)$ be the $H_{\st}^{*}(\bZ)$-module map induced by the multiplication by $e_{i}$ on each summand $H_{\st}^{*}(\bZ) [2i-1]$; its cokernel is isomorphic to $H_{\st}^{*}(\tilde{H})\otimes_{\bZ[\cE]}\bZ$. We deduce from the exactness of \eqref{eq:LES_tilde_H} that:
\begin{prop}\label{prop:stable_cohomology_first_homology_group_result_integral}
The graded $H_{\st}^{*}(\bZ)$-module $H_{\st}^{\even}(\tilde{H})$ is isomorphic to an extension of $\ker(\Xi)$ by $H_{\st}^{*}(\bZ)\otimes_{\bZ[\cE]}\bZ$.
\end{prop}
To make further computations, we must restrict to considering the rational homology representations.
First of all, since $\Sym_{\bQ}(\cE)=H_{\st}^{*}(\bQ)$ is concentrated in even degrees (see \eqref{eq:Madsen_weiss_thm}), the graded cohomology groups $H_{\st}^{\odd}(\tilde{H}_{\bQ})$ and $H_{\st}^{\even}(\tilde{H}_{\bQ})$ inherit canonical $\Sym_{\bQ}(\cE)$-module structures from that of $H_{\st}^{*}(\tilde{H}_{\bQ})$ induced by the cup product, and then the decomposition
\begin{equation}\label{eq:decomposition_parity_twisted_cohomology}
H_{\st}^{*}(\tilde{H}_{\bQ}) = H_{\st}^{\even}(\tilde{H}_{\bQ})\oplus H_{\st}^{\odd}(\tilde{H}_{\bQ})
\end{equation}
is stable under the action of the algebra $\Sym_{\bQ}(\cE)$.
Furthermore, since $H_{\st}^{\odd}(\bQ) = 0$ by \eqref{eq:Madsen_weiss_thm} while $H_{\st}^{\even}(H_{\bQ}) = 0$ by \eqref{eq:stablecohomologyKawazumi}, we deduce that, for each $i\geq0$, the map $\mu_{2i+1}(m_{1,1},-)\colon H_{\st}^{2i+1}(H_{\bQ})\to H_{\st}^{2i+2}(\bQ)$ is surjective while $\mu_{2i}(m_{1,1},-)\colon H_{\st}^{2i}(H_{\bQ})\to H_{\st}^{2i+1}(\bQ)$ is null. We denote by $\theta$ the stable $0$th-cohomology class defined by the fibre of the locally trivial fibration $\bS^{1}\overset{\iota}{\hookrightarrow}UT\Sigma_{g,1}\overset{\varpi}{\to}\Sigma_{g,1}$; in particular, we have $H_{\st}^{0}(\tilde{H}_{\bQ})\cong\bQ\theta$.
Then we deduce from the rational version of cohomology long exact sequence \eqref{eq:LES_tilde_H} that:
\begin{coro}\label{cor:stable_cohomology_first_homology_group_result}
The $\Sym_{\bQ}(\cE)$-module $H_{\st}^{\even}(\tilde{H}_{\bQ})$ is isomorphic to the trivial $\Sym_{\bQ}(\cE)$-module $H_{\st}^{0}(\tilde{H}_{\bQ})\cong\bQ\theta$ (i.e.~each class $e_{i}$ acts as zero on $\bQ\theta$) and $H_{\st}^{\odd}(\tilde{H}_{\bQ})$ is isomorphic to the kernel of the graded morphism $\mu_{\odd}(m_{1,1},-)=\bigoplus_{i\geq0} \mu_{2i+1}(m_{1,1},-)\colon H_{\st}^{\odd}(H_{\bQ})\to \Sym_{\bQ}(\cE)$.
\end{coro}
Therefore, we have the following exact sequence of $\Sym_{\bQ}(\cE)$-modules:
\begin{equation}\label{ESasmodule}
\xymatrix{H_{\st}^{\odd}(\tilde{H}_{\bQ})\ar@{^{(}->}[r] & H_{\st}^{\odd}(H_{\bQ})\ar@{->}[rr]^-{\mu_{\odd}(m_{1,1},-)} &  & \Sym_{\bQ}(\cE)\ar@{->}[r]^-{\aug} & \bQ\ar@{->}[r] & 0.}
\end{equation}

\paragraph*{Computations of $\Tor$-groups.}
In order to give some qualitative properties of the stable twisted cohomology groups that we study, we compute the $\Tor$-groups of the stable twisted cohomology groups $H_{\st}^{*}(\tilde{H}_{\bQ})$ as $\Sym_{\bQ}(\cE)$-module.
\begin{thm}\label{thm:homology_algebra_first}
For any $j \geq 0$, we have 
$\Tor_{j}^{\Sym_{\bQ}(\cE)}(\bQ,H_{\st}^{*}(\tilde{H}_{\bQ}))\cong\Lambda^{j}\cE\oplus\Lambda^{j+2}\cE$. In particular, the $\Sym_{\bQ}(\cE)$-module $H_{\st}^{*}(\tilde{H}_{\bQ})$ is not free and $\Tor_{0}^{\Sym_{\bQ}(\cE)}(\bQ,H_{\st}^{*}(\tilde{H}_{\bQ}))\cong\Lambda^{2}\cE\oplus \bQ$.
\end{thm}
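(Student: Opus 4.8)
The plan is to reduce everything to the polynomial algebra $S:=\Sym_{\bQ}(\cE)=\bQ[e_1,e_2,\dots]$ and exploit the $S$-module decomposition $H_{\st}^{*}(\tilde{H}_{\bQ})\cong\bQ\theta\oplus K$ coming from \eqref{eq:decomposition_parity_twisted_cohomology} and Proposition~\ref{prop:stable_cohomology_first_homology_group_result}, where $K:=H_{\st}^{\odd}(\tilde{H}_{\bQ})$ is the kernel of $\mu^{\odd}(m_{1,1},-)$. Since $\Tor$ is additive on direct sums, this separates the computation into the two summands. For the trivial summand $\bQ\theta\cong\bQ$, I would invoke the Koszul resolution of $\bQ$ over the polynomial algebra $S$ to identify $\Tor_{j}^{S}(\bQ,\bQ\theta)\cong\Tor_{j}^{S}(\bQ,\bQ)\cong\Lambda^{j}\cE$ for all $j\geq0$. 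It then remains to prove $\Tor_{j}^{S}(\bQ,K)\cong\Lambda^{j+2}\cE$ for all $j\geq0$, after which the stated formula follows by collecting terms (with $\Lambda^{0}\cE\cdot\theta=\bQ\theta$ giving the extra summand in degree $0$).

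To handle $K$, I would split the four-term exact sequence \eqref{ESasmodule} at the image of $\phi:=\mu^{\odd}(m_{1,1},-)$. Writing $I:=\ker(\aug)\subset S$ for the augmentation ideal and $F:=H_{\st}^{\odd}(H_{\bQ})$ for the free $S$-module on $\{m_{\alpha,1}\}_{\alpha\geq1}$, exactness of \eqref{ESasmodule} yields two short exact sequences, namely $0\to K\to F\overset{\phi}{\to}I\to0$ and $0\to I\to S\to\bQ\to0$. Applying the long exact sequence of $\Tor_{*}^{S}(\bQ,-)$ to the second sequence and using that $S$ is free, one obtains $\Tor_{j}^{S}(\bQ,I)\cong\Tor_{j+1}^{S}(\bQ,\bQ)\cong\Lambda^{j+1}\cE$ for $j\geq1$, and in degree $0$ the identification $\Tor_{0}^{S}(\bQ,I)\cong I/I^{2}\cong\cE=\Lambda^{1}\cE$; so $\Tor_{j}^{S}(\bQ,I)\cong\Lambda^{j+1}\cE$ for all $j\geq0$. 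Feeding this into the long exact sequence of the first short exact sequence, and again using freeness of $F$, gives the dimension shift $\Tor_{j}^{S}(\bQ,K)\cong\Tor_{j+1}^{S}(\bQ,I)\cong\Lambda^{j+2}\cE$ immediately for every $j\geq1$.

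The one step requiring care — and the main, if modest, obstacle — is the bottom of that last long exact sequence, namely the segment $\Tor_{1}^{S}(\bQ,I)\to\Tor_{0}^{S}(\bQ,K)\to\bQ\otimes_{S}F\overset{\bar{\phi}}{\to}\bQ\otimes_{S}I\to0$. Here I must check that $\bar\phi$ is an isomorphism. By the contraction formula \eqref{eq:contractionformula} one has $\phi(m_{\alpha,1})=-e_{\alpha}$, and since $\bQ\otimes_{S}I\cong I/I^{2}\cong\cE$ has the classes of the $e_{\alpha}$ as a basis while $\{m_{\alpha,1}\}$ is a basis of $\bQ\otimes_{S}F$, the map $\bar\phi$ sends one basis to $\{-e_{\alpha}\}$ and is therefore an isomorphism. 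Consequently the connecting map $\Tor_{1}^{S}(\bQ,I)\to\Tor_{0}^{S}(\bQ,K)$ is injective (because $\Tor_{1}^{S}(\bQ,F)=0$) and surjective (because $\bar\phi$ is injective, so the preceding map $\Tor_{0}^{S}(\bQ,K)\to\bQ\otimes_{S}F$ vanishes), hence an isomorphism, yielding $\Tor_{0}^{S}(\bQ,K)\cong\Lambda^{2}\cE$ and completing the uniform formula $\Tor_{j}^{S}(\bQ,K)\cong\Lambda^{j+2}\cE$.

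Assembling the two summands gives $\Tor_{j}^{S}(\bQ,H_{\st}^{*}(\tilde{H}_{\bQ}))\cong\Lambda^{j}\cE\oplus\Lambda^{j+2}\cE$ for $j>0$ and $\bQ\theta\oplus\Lambda^{2}\cE$ for $j=0$, as claimed. Finally, non-freeness is immediate: any free $S$-module $M$ has $\Tor_{>0}^{S}(\bQ,M)=0$, whereas here $\Tor_{j}^{S}(\bQ,H_{\st}^{*}(\tilde{H}_{\bQ}))$ contains $\Lambda^{j}\cE$ as a direct summand, which is nonzero for every $j>0$ since $\cE$ is infinite-dimensional.
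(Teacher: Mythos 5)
Your proof is correct and follows essentially the same route as the paper: the even/odd splitting $H_{\st}^{*}(\tilde{H}_{\bQ})\cong\bQ\theta\oplus H_{\st}^{\odd}(\tilde{H}_{\bQ})$, the Koszul resolution for $\bQ\theta$, and dimension-shifting through the two short exact sequences extracted from \eqref{ESasmodule} (the paper's $\Image(\mu^{\odd}(m_{1,1},-))$ is exactly your augmentation ideal $I$), with the same use of $\mu_{*}(m_{1,1},m_{\alpha,1})=-e_{\alpha}$ to identify the degree-zero Tor maps as isomorphisms. Your explicit verification that $\bar{\phi}$ is an isomorphism in the bottom segment of the long exact sequence is precisely the point the paper handles by noting that $H_{\st}^{\odd}(H_{\bQ})\to\Image(\mu^{\odd}(m_{1,1},-))$ induces $\cE\cong\cE$ on $\Tor_{0}$, so nothing is missing.
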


\begin{proof}
Using the splitting \eqref{eq:decomposition_parity_twisted_cohomology}, the results follow from the respective computations of the left derived functors $\Tor_{*}^{\Sym_{\bQ}(\cE)}(\bQ,-)$ of the $\Sym_{\bQ}(\cE)$-modules $H_{\st}^{\even}(\tilde{H}_{\bQ})$ and $H_{\st}^{\odd}(\tilde{H}_{\bQ})$. Since $\Sym_{\bQ}(\cE)$ is a polynomial algebra on the vector space $\cE$, we deduce that $\Tor_{j}^{\Sym_{\bQ}(\cE)}(\bQ,\bQ)\cong\Lambda^{j}\cE$ by using the resolution given by the Koszul complex (see \cite[\S3.4.6]{Loday} or \cite[\S9.1.]{BourbakiAlgHomol} for instance). Hence $\Tor_{j}^{\Sym_{\bQ}(\cE)}(\bQ,H_{\st}^{\even}(\tilde{H}_{\bQ}))\cong \Lambda^{j}\cE$ because $H_{\st}^{\even}(\tilde{H}_{\bQ})\cong \bQ\theta \cong \bQ$ as a $\Sym_{\bQ}(\cE)$-module by Corollary~\ref{cor:stable_cohomology_first_homology_group_result}.

Now, we recall that $H_{\st}^{*}(H_{\bQ})=H_{\st}^{\odd}(H_{\bQ})$ is a free $\Sym_{\bQ}(\cE)$-module by \eqref{eq:stablecohomologyKawazumi}. Then we note that the exact sequence of $\Sym_{\bQ}(\cE)$-modules \eqref{ESasmodule} has its two interior terms which are free (and thus projective) $\Sym_{\bQ}(\cE)$-modules. Hence a projective $\Sym_{\bQ}(\cE)$-module resolution of $H_{\st}^{\odd}(\tilde{H}_{\bQ})$ induces via \eqref{ESasmodule} a projective $\Sym_{\bQ}(\cE)$-module resolution of $H_{\st}^{\odd}(\tilde{H}_{\bQ})$. Therefore $\Tor_{j}^{\Sym_{\bQ}(\cE)}(\bQ,H_{\st}^{\odd}(\tilde{H}_{\bQ}))\cong\Tor_{j+2}^{\Sym_{\bQ}(\cE)}(\bQ,\bQ)\cong\Lambda^{j+2}\cE$ for all $j\geq0$ thanks to the Koszul resolution, which ends the proof.
\end{proof}

In the paper \cite{KawazumiSoulieII}, we will compute $\Tor_{j}^{\Sym_{\bQ}(\cE)}(\bQ,H_{\st}^{*}(\Lambda^{d}\tilde{H}_{\bQ}))$
for $2 \leq d \leq 5$. In particular, this group is non-trivial for each degree $j \geq 0$, except for the case $d=2$.

\paragraph*{Computation of the stable cohomology.}
Finally, we explicitly describe the generators and relations of the module $H_{\st}^{\odd}(\tilde{H}_{\bQ})$ as follows.
For each pair $(i,j)$ of non-null natural numbers such that $j>i$, note that $\mu_{\odd}(m_{1,1},e_{i}m_{j,1}- e_{j}m_{i,1})=0$, so there is a unique non-trivial class in $H^{2(i+j)-1}_{\st}(\tilde{H}_{\bQ})$ which is mapped to $e_{i}m_{j,1}- e_{j}m_{i,1}$ along $H^{2(i+j)-1}_{\st}(\tilde{H}_{\bQ})\hookrightarrow H^{2(i+j)-1}_{\st}(H_{\bQ})$, that we also denote by $e_{i}m_{j,1}- e_{j}m_{i,1}$. Let $\mathfrak{M}$ be the quotient of the free $\Sym_{\bQ}(\cE)$-module generated by the  $\{M_{i,j};j>i\geq1\}$ by the $\Sym_{\bQ}(\cE)$-submodule generated by the elements $\{e_i M_{j,k} - e_j M_{i,k} + e_k M_{i,j};k > j > i \geq 1\}$.
Using this description, we prove the following.
\begin{thm}\label{thm:explicit-generators-stable-cohomology-tildeH}
Mapping $M_{i,j}$ to $e_{i} m_{j,1} - e_{j} m_{i,1}$ for each $j>i\geq1$ defines a $\Sym_{\bQ}(\cE)$-module isomorphism from $\mathfrak{M}$ to $H_{\st}^{\odd}(\tilde{H}_{\bQ})$.
\end{thm}
\begin{proof}
We consider the $\Sym_{\bQ}(\cE)$-modules $\Omega_{\Sym_{\bQ}(\cE)|\bQ}^{n}:=\Sym_{\bQ}(\cE)\otimes\Lambda^{n}\cE$ for all $n\geq0$.
We recall from \cite[\S3]{BourbakiAlgHomol} that the Koszul complex associated to the $\Sym_{\bQ}(\cE)$-modules $\{\Omega_{\Sym_{\bQ}(\cE)|\bQ}^{n}\}_{n\in\bN}$ defines a free resolution of $\bQ$ as a $\Sym_{\bQ}(\cE)$-module. More precisely, there is an exact sequence of $\Sym_{\bQ}(\cE)$-modules
\begin{equation}\label{eq:differential_forms_resolution}
\xymatrix{\cdots\ar@{->}[r]^-{\partial_{n+1}} & \Omega_{\Sym_{\bQ}(\cE)|\bQ}^{n}\ar@{->}[r]^-{\partial_{n}} & \Omega_{\Sym_{\bQ}(\cE)|\bQ}^{n-1}\ar@{->}[r]^-{\partial_{n-1}} & \cdots\ar@{->}[r]^-{\partial_{1}} & \Sym_{\bQ}(\cE)\ar@{->}[r] & \bQ\ar@{->}[r] & 0,}
\end{equation}
where the differential $\partial_{n}: \Omega_{\Sym_{\bQ}(\cE)|\bQ}^{n} \to \Omega^{n-1}_{\Sym_{\bQ}(\cE)|\bQ}$ is the $\Sym_{\bQ}(\cE)$-module morphism defined by
\begin{equation}\label{eq:def_P_D_n}
\partial_{n}(x \otimes (e_{j_{1}}\wedge\cdots \wedge e_{j_n})) = \sum^{n}_{i=1}(-1)^{i+1} (e_{j_{i}}\cdot x) \otimes (e_{j_{1}}\wedge\cdots\wedge \widehat{e_{j_{i}}} \wedge\cdots \wedge e_{j_n})
\end{equation}
for each $x \in \Sym_{\bQ}(\cE)$, where $e_{j_{i}}\cdot x$ is the product of $e_{j_{i}}$ and $x$ in the algebra $\Sym_{\bQ}(\cE)$.
From now on, we identify the twisted Mumford-Morita-Miller class $m_{i,1}$ with the element $1\otimes e_{i}\in\Omega_{\Sym_{\bQ}(\cE)|\bQ}^{1}$ for each $i\geq0$.
Then the differential $\partial_{1}\colon\Sym_{\bQ}(\cE)\otimes\cE \to \Sym_{\bQ}(\cE)$ is a $\Sym_{\bQ}(\cE)$-linear derivation satisfying $\partial_{1}(m_{i,1}) = e_i$ for all $i \geq 1$.
Therefore, it follows from Corollary~\ref{cor:stable_cohomology_first_homology_group_result} that
\begin{equation}\label{eq:iso_cohomology_tilde_H_Ker}
H_{\st}^{\odd}(\tilde{H}_{\bQ})\cong\Ker(\partial_{1}:\Omega_{\Sym_{\bQ}(\cE)|\bQ}^{1}\to\Omega_{\Sym_{\bQ}(\cE)|\bQ}^{0}),
\end{equation}
so that the truncation of \eqref{eq:differential_forms_resolution} provides an exact sequence of $\Sym_{\bQ}(\cE)$-modules
$$\xymatrix{\cdots\ar@{->}[r]^-{\partial_{3}} & \Omega_{\Sym_{\bQ}(\cE)|\bQ}^{2}\ar@{->}[r]^-{\partial_{2}} & H_{\st}^{\odd}(\tilde{H}_{\bQ})\ar@{->}[r] & 0.}$$
Then, it follows from the formula \eqref{eq:def_P_D_n} that the image of the differential $\partial_{2}$ exactly corresponds to the presentation by generators and relations of $\mathfrak{M}$, which ends the proof.
\end{proof}

\begin{rmk}[\emph{Interpretation in terms of functor homology.}]\label{rmk:functor_homology_interpretation2}
Contrary to the case of $\tilde{H}^{\vee}_{\bQ}$ (see Remark~\ref{rmk:functor_homology_interpretation}), Theorem~\ref{thm:stable_functor_homology} does not apply since the modules $\tilde{H}_{\bQ}$ define a \emph{covariant} twisted coefficient system.
\end{rmk}

\phantomsection
\addcontentsline{toc}{section}{References}
\renewcommand{\bibfont}{\normalfont\small}
\setlength{\bibitemsep}{0pt}
\printbibliography

\noindent {Nariya Kawazumi, \itshape Department of Mathematical Sciences, 
University of Tokyo, 3-8-1 Komaba, Meguro-ku, Tokyo 153-
8914, Japan}. \noindent {Email address: \tt kawazumi@ms.u-tokyo.ac.jp}

\noindent {Arthur Souli{\'e}, \itshape Normandie Univ., UNICAEN, CNRS, LMNO, 14000 Caen, France.} \noindent {Email address: \tt artsou@hotmail.fr, arthur.soulie@unicaen.fr}

\end{document}